\newcommand{\tast}{T^{\ast}T}
\newcommand{\xast}{x^{\ast}}
\newcommand{\bastb}{B^{\ast}B}
\newcommand{\bart}{T}
\newcommand{\psmooth}{\beta}
\newcommand{\expect}{\mathbb E}
\newcommand{\ra}{r_{\alpha}}
\newcommand{\sa}{s_{\alpha}}
\newcommand{\ga}{g_{\alpha}}
\newcommand{\xad}{x^\delta_\alpha}
\newcommand{\Cad}{C^\delta(\alpha)}
\newcommand{\Vad}{V^\delta(\alpha)}
\newcommand{\lr}[1]{\left( #1 \right)}
\newcommand{\norm}[2]{\left\| #1 \right\|_{#2}}
\newcommand{\abs}[1]{| #1 |}
\newcommand{\set}[1]{\left\{ #1 \right\}}
\newcommand{\scalar}[2]{\langle{ #1},{#2} \rangle}
\newcommand{\real}{\mathbb R}
\newcommand{\yd}{y^\delta}
\newcommand{\af}{A_\varphi}
\newcommand{\aast}{\alpha_\ast}
\newcommand{\tr}[1]{\operatorname{tr}\left[#1\right]}
\newcommand{\range}{\mathcal R}
\newcommand{\domain}{\mathcal D}
\newcommand{\prc}{C_0}
\newcommand{\poc}{C}
\newcommand{\prm}{m_0}
\newcommand{\pom}{m}
\newcommand{\Sp}{S_{T,\prc}}
\newcommand{\spc}{\operatorname{SPC}}
\newcommand{\mad}{m_\alpha^\delta}
\newcommand{\G}{\mathcal N}
\newcommand{\cN}{\mathcal N}
\newcommand{\z}{z^\delta}
\newcommand{\bigo}{\mathcal O}
\newcommand{\prr}{a}
\newtheorem{lem}{Lemma}
\newtheorem{thm}{Theorem}
\newtheorem{prop}{Proposition}
\newtheorem{Def}{Definition}
\theoremstyle{definition}
\newtheorem*{notation}{Notation}
\newtheorem{ass}{Assumption}
\newtheorem{rem}{Remark}
\newtheorem{xmpl}{Example}
\numberwithin{ass}{section}
\numberwithin{prop}{section}
\numberwithin{rem}{section}
\numberwithin{lem}{section}
\title[Preconditioning the prior in Bayesian inverse
problems]{Preconditioning the prior to overcome saturation in Bayesian
  inverse problems} 
\author{Sergios Agapiou}
\email{Sergios.Agapiou@warwick.ac.uk}
\address{Mathematics Institute, University of warwick, Coventry CV4
  7AL, United Kingdom}
\author{Peter Math\'e}
\email{peter.mathe@wias-berlin.de}
\address{Weierstra{\ss} Institute for Applied Analysis and
  Stochastics, Mohrenstra\ss e 39, 10117 Berlin,  Germany}
\date{Version: \today}
\keywords{Bayesian inverse problem, posterior contraction, saturation}
\subjclass[2010]{62G20, secondary: 62C10, 62F15, 45Q05}
\date{\today:  \currenttime}
\begin{document}
\begin{abstract}
  We study Bayesian inference in statistical linear inverse problems
  with Gaussian noise and priors 
  in Hilbert space. We focus our interest on the posterior
  contraction rate in the small noise limit. Existing results suffer from a certain
  saturation phenomenon, when the data generating element is too
  smooth compared to the smoothness inherent in the prior. We show how
  to overcome this saturation in an empirical Bayesian framework by
  using a non-centered data-dependent prior. The center is obtained
  from a preconditioning regularization step, which provides us with
  additional information to be used in the Bayesian
  framework. We use general techniques known from regularization
    theory. {To highlight the significance of the findings we
    provide several examples. In particular, our approach
      allows to obtain and, using preconditioning improve after
      saturation, {minimax} rates of
      contraction established in previous studies. We also establish {minimax}
      contraction rates in cases which have not been considered so
      far.} 
\end{abstract}
\maketitle

\section{Setup}
\label{sec:setup}

We consider the following linear equation in {real} Hilbert space
\begin{equation*}
  \yd = K x + \delta\eta,
\end{equation*}
where $K\colon X\to Y$ is a linear operator acting between 
the real separable Hilbert spaces~$X$ and $Y$, $\eta\sim \mathcal
N(0,\Sigma)$ is an additive centered Gaussian noise, {and $\delta>0$ is a scaling constant modelling the size of the noise}. Here, the covariance operator $\Sigma:Y\to Y$ is a
self-adjoint and positive definite bounded linear operator. 
We formally pre-whiten this equation and get
\begin{equation*}
  \label{eq:base-white-orig}
 \z =   \Sigma^{-1/2} \yd = \Sigma^{-1/2}K x + \delta\xi,
\end{equation*}
where now $\xi \sim \mathcal N(0,I)$ is Gaussian white noise. We
assign $\bart:= \Sigma^{-1/2}K$, and we assume that this is
bounded by imposing the condition  $\range(K)\subset \domain(\Sigma^{-1/2})$. We hence arrive to the data model
\begin{equation}
  \label{eq:base-white}
 \z =   T x + \delta\xi,
\end{equation}
and we consider the Bayesian approach to the statistical inverse
problem of finding $x$ from the observation $\z$. We assume Gaussian
priors on $x$, distributed according to~$\G(0,\frac{\delta^{2}}\alpha
  \prc)$, where $\prc: X\to
X$ is a positive definite, self-adjoint and trace class linear
operator, and $\alpha>0$ is a scaling constant. 
Linearity suggests that the posterior is also Gaussian and in this
paper we are interested in the asymptotic performance of the posterior
in the small noise limit, $\delta\to0$.

\subsection*{Squared posterior contraction}

Suppose that we observe data~$\z$ generated from the model
(\ref{eq:base-white}) for a fixed underlying true element $\xast\in X$
and  corresponding to a  noise level $\delta$. It is then reasonable
to expect that for small $\delta$ and for appropriate values of
$\alpha,$ the posterior Gaussian distribution will concentrate
around the true data-generating element~$\xast$. As we discuss below, this concentration will be driven by the \emph{squared
  posterior contraction} (SPC), given as
\begin{equation}
  \label{eq:sqpostconc}
  \spc:= \expect^{\xast}\expect^{\z}_{\alpha}\norm{\xast - x}{}^{2},
\end{equation}
where the outward expectation is taken with respect to the data
generating distribution,
that is, the distribution  generating~$\z$ when $\xast$ is given, and the inward expectation is taken with respect to the posterior
distribution, given data~$\z$ and having chosen a
parameter~$\alpha$. The Gaussian posterior distribution has a posterior mean, say~{$\xad=
\xad(\z;\alpha)$}, and a posterior covariance, say $\Cad$, which is
independent from the data~$\z$, and thus deterministic.
Then the inner expectation obeys the usual bias-variance decomposition
$$
\expect^{\z}_{\alpha}\norm{\xast - x}{}^{2} = \norm{\xast - \xad}{}^{2} + \tr{\Cad}.
$$
Applying the expectation with respect to the data generating
distribution, we obtain that
\begin{equation*}
\expect^{\xast}\expect^{\z}_{\alpha}\norm{\xast - x}{}^{2} =
\expect^{\xast}\norm{\xast - \xad}{}^{2} + \tr{\Cad}.
\end{equation*}
The quantity~$\expect^{\xast}\norm{\xast - \xad}{}^{2}$ represents the
mean integrated squared error (MISE) of the posterior mean viewed as an estimator of $\xast$, and
it has again a bias-variance decomposition into squared bias $b^2_{\xast}(\alpha):= \norm{\xast -
  \expect^{\xast}\xad}{}^{2}$ and
estimation variance~$\Vad:=\expect^{\xast}\norm{\xad-\expect^{\xast}\xad}{}^2$. We have thus decomposed the squared
  posterior contraction into respectively the \emph{squared bias}, the \emph{estimation
  variance}, and the \emph{spread in the posterior distribution}
\begin{equation}
  \label{eq:base-decomposition}
\spc(\alpha,\delta)= b^2_{\xast}(\alpha)+ \Vad + \tr{\Cad}. 
\end{equation}
We emphasize here, that the decomposition remains valid in the more general case of non-centered Gaussian priors.
  
It is clear, that if possible the hyper-parameter $\alpha$ should be chosen in a way that optimizes the SPC.
This raises several questions and challenges.

First, how do the estimation variance~$\Vad$ and the posterior spread
$\tr{\Cad}$ relate?
In previous studies, these quantities appear to be either of the same order, see proof of \cite[Thm 4.1]{MR2906881}, or the posterior spread dominates the estimation variance, see proofs of \cite[Thm 4.3]{MR3215928} and \cite[Thm 2.1]{MR3031282}. As
  was first highlighted in~\cite{BayesIP}, there is a natural
  relation~$\Vad \leq \tr{\Cad},$ whenever the prior is centered.

The \emph{posterior contraction rate} is concerned with the concentration rate of the posterior distribution around the truth, in the small noise
  limit $\delta\to 0$, and given a prior distribution. {It is well
    known, that the square root of the convergence rate of SPC is a
    posterior contraction rate (see for example \cite[Section
    7]{MR3084161})}. Given the prior scaling assumed here, SPC decays
  to zero provided that the parameter $\alpha$ 
  is chosen such that $\alpha=\alpha(\delta)\to0$ in an appropriate manner. 
 The study of this decay was the subject of the
papers~\cite{MR2906881,MR3084161,MR3031282,MR3215928}. 
 The obtained rates of convergence depend on the 
relationship between the regularity of the data-generating element
$\xast$ and the regularity inherent in the prior (see
\cite[\S~2.4]{DS13} for details on the regularity of draws from Gaussian measures in Hilbert space). The general message is that 
if the prior regularity matches the regularity of $\xast$, then the
convergence rate of SPC is the minimax-optimal rate even without
rescaling the prior, that is for the scaling considered here, $\alpha$ should be chosen
to be equal to $\delta^2$. If there is a mismatch between the prior
regularity and the regularity of the truth, then the minimax rate can
be achieved by appropriately rescaling the prior. If the prior is
smoother than the truth, then there exists an a priori parameter
choice rule $\alpha=\alpha(\delta)$ such that
$\frac{\delta^2}{\alpha}\to\infty$ as $\delta\to0$, which gives the optimal rate. If however the prior is rougher than the truth, then the minimax rate can be achieved by appropriate choices $\alpha=\alpha(\delta)$ such that $\frac{\delta^2}{\alpha}\to0$ as $\delta\to0$, in general only up to a maximal smoothness of {$\xast$}. As quoted in \cite{MR2906881}, rescaling can make the prior arbitrarily 'rougher' but not arbitrarily 'smoother'. 
A closer look at the situation reveals, and we shall highlight this
 in our subsequent analysis, that the estimation bias, which is part
 of the SPC in~(\ref{eq:base-decomposition}), is responsible for this phenomenon.
Bounds for the bias depend on the 
  inter-relation between the underlying solution smoothness and the
  capability of the chosen (Tikhonov-type since we have Gaussian priors) reconstruction by means of
  $\xad$  to take it into   account. The
  capability of such a scheme to take smoothness into account is
  called \emph{qualification} of the scheme, whereas the limited decay rate of the
  bias, as $\alpha\to 0$, due to the chosen reconstruction scheme, is
  called \emph{saturation} of the scheme. Details will be given
  below.

Finally, the optimal choices $\alpha=\alpha(\delta)$ depend
  on the regularity of $\xast$, which is in practice unknown. In
  the literature there have been two strategies to overcome these
  difficulties, both in the simplified setting of the white noise model (that is, the case
  $K=\Sigma=I$). The first one is to attempt to learn the correct
  scaling from the data, either by  using a maximum likelihood
  empirical Bayes approach, or by a fully hierarchical approach. This
  has been studied in~\cite{MR3044507}, where the results show that in
  both approaches the minimax rate is achieved but again up to a maximal
  regularity of the truth (which surprisingly is smaller than the one
  for the oracle type choice of $\alpha$). The second strategy is to not
  rescale the prior but rather attempt to learn the correct regularity
  from the data, again either using a maximum likelihood empirical
  Bayes or a fully hierarchical approach. This is the topic
  of \cite{knapiketal-adaptive},  where indeed the authors show that
  the minimax rate is achieved by both of the approaches. The last method seems to address both the issue of saturation and of choosing $\alpha$, however, all of the methods mentioned in this paragraph
 can be  difficult to implement. On the one hand as it is shown in \cite{ABPS14}, the implementation of the hierarchical approach in non-trivial problems is problematic in high dimensions and for small noise, while on the other hand the above empirical Bayes approaches involve solving an optimization problem which also becomes difficult for non-trivial problems.

\subsection*{Paradigm}

Here we
consider the following alternative paradigm. 
Suppose we want to use a Gaussian prior with covariance $\prc$, and
prior mean~$\prm$ to gain posterior inference for the
problem~(\ref{eq:base-white}).
The question we address is whether the prior center~$\prm$ has a significant impact on
the posterior contraction rate, and if so, how to choose it
'optimally' in the presence of data.
The subsequent analysis will show that the convergence rate of SPC will
improve by an appropriate  adjustment of the prior if the underlying
solution~$\xast$ has large smoothness.  In terms of the previous
discussion, for a prior of fixed smoothness this enables us to make \emph{a
priori} choices of $\alpha=\alpha(\delta)$ such that the posterior
contraction rate is minimax-optimal even for higher smoothness of $\xast,$ by
choosing an appropriate center~$\prm$ of the prior distribution.
 The proposed {re-centering~$\prm=\prm(\z;\alpha)$} of the
prior depends on the data~$\z$ and the parameter~$\alpha$, it is not
static. However,  it can easily be managed by a regularization step
preprocessing the Bayes step. We anticipate these results
in the following Figure~\ref{fig1}. This figure highlights the results as described in \S~\ref{sec:moderate-examples}.
\begin{figure}[htp]
  \center{
    \includegraphics[type=pdf, ext=.pdf, read=.pdf,
    width=0.7\columnwidth]{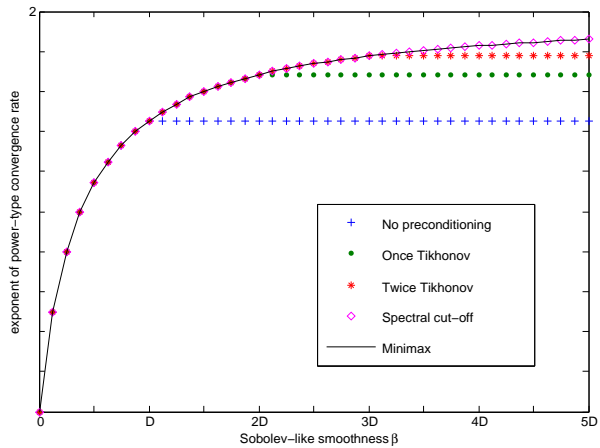}
  }
  \caption{Exponents of convergence rates of SPC plotted against Sobolev-like smoothness of the truth $\psmooth$, for different methods of choosing the prior mean $\mad$, in the moderately ill-posed problem discussed in \S~\ref{sec:moderate-examples}. We set $D:=1+2\prr+2p$, the saturation point when no preconditioning of the prior mean is used. Rates calculated for $\prr=0.5,~ p=1$.}  \label{fig1}
\end{figure}

We capture the advantages in a few lines:
  \begin{itemize}
  \item the user may choose a (centered) Gaussian prior of arbitrary
    smoothness;
\item after observing data~$\z$, a prior center, say $\prm=\prm(\z;\alpha)$ is determined by some
  deterministic regularization;
\item if this preprocessing regularization has enough
  qualification, then the posterior distribution will contract order
  optimally regardless of the solution smoothness. If not, then 
the contraction rate is at least as good as the rate corresponding to a centered prior.
  \item this preprocessing step has no effect on the parameter
  choice; so \emph{any} choice~{$\alpha =
  \alpha(\delta;\z)$} which yields
{'optimal'} contraction without
    preprocessing will retain this property, and will eventually
    extend this optimality property for higher solution smoothness. 
      \end{itemize}

\subsection*{Outline}

In order to explain the new paradigm we first study the impact of
using a non-centered prior to the posterior mean and covariance. Then
we specify the prior centering by means of using a linear
regularization in Eq.~(\ref{eq:prm}), as such is known from regularization theory. Next, we provide explicit representations of the quantities involved
in the subsequent analysis, the posterior mean, the posterior
covariance, and formulas for the bias and estimation variance, see
Eq.~(\ref{ali:pom-representation})--(\ref{ali:vad}). 

The main results are given in Section~\ref{sec:link}, after confining
ourselves to the case of commuting operators~$\prc$ and $\tast$,
expressed in terms of a specific link condition. We first derive
bounds for the estimation bias in Proposition~\ref{pro:bias-main}, and
these bounds are crucial for overcoming the saturation. Then we
introduce the net posterior spread in \S~\ref{sec:cad-control},
which is the unscaled version of the posterior spread, and we highlight its properties. We then combine to obtain our main result on the convergence of SPC, which is Theorem \ref{thm:main}.

To emphasize the significance of our results we discuss in
Section~\ref{sec:examples} specific
examples {some of which} were previously studied in~\cite{MR2906881, MR3215928, MR3031282}. 
In order to facilitate the reading of the study we postpone all proofs 
to the final Section~\ref{sec:proofs}.

\section{Setting the pace}
\label{sec:pace}

As mentioned above, we shall discuss a preprocessing of the
prior by choosing it non-central, that is, we will introduce a shift $\prm$,
such that the prior will be Gaussian with $\cN(\prm,{\frac{\delta^2}\alpha}\prc)$. In particular, we are interested in understanding the impact of the shift $\prm$ on the convergence rate of SPC.
For the reader's convenience, we start with deriving formulas for the posterior mean~$\xad$ in this
context. 

We first recall the representation of the posterior mean~$\pom$ and posterior
covariance~$\poc$ when a centered prior $\cN(0,\frac{\delta^{2}}\alpha\prc)$
is used. In this case we know, see for example~\cite{AM84, MR1009041}, {that almost surely with respect to the joint distribution of $(x,\z)$ the posterior is Gaussian, $\cN(\pom,\poc)$, for
\begin{align}\label{eq:cpom}
  \pom &= \prc^{1/2}\lr{\alpha I+\bastb}^{-1}B^{\ast}\z,\\
\intertext{and}
\poc &=  {\delta^{2}} \prc^{1/2} \lr{\alpha I+\bastb}^{-1}\prc^{1/2},
\end{align}
where we define {the compact operator} $B:=T \prc^{\frac12}$.
Re-centering the prior towards $\prm$ does not affect the
posterior covariance~$\poc$. To obtain the shift in the posterior mean we
rewrite~(\ref{eq:base-white}) as
$$
\z - T \prm = T(x - \prm) + \delta \xi
$$
Thus if $x\sim \cN(\prm,\prc)$ then $x-\prm\sim \cN(0,\prc)$. We are
in the usual context with centered prior but new data $\z - T
\prm$. This gives the representation for the posterior mean (shifting
back towards $\prm$) as
\begin{align*}
\xad &= \prm + \prc^{1/2}\lr{\alpha I+\bastb}^{-1}B^{\ast}(\z - T
\prm)\\
&= \prc^{1/2}\lr{\alpha I+\bastb}^{-1}B^{\ast}\z + \prm -
\prc^{1/2}\lr{\alpha I+\bastb}^{-1}B^{\ast}T\prm \\
&= \prc^{1/2}\lr{\alpha I+\bastb}^{-1}B^{\ast}\z + \prc^{1/2}\lr{I
  -\lr{\alpha I+\bastb}^{-1}\bastb }\prc^{-1/2}\prm\\
&=  \prc^{1/2}\lr{\alpha I+\bastb}^{-1}B^{\ast}\z + \prc^{1/2}\sa(\bastb)\prc^{-1/2}\prm,
\end{align*}
where we introduce the function~$\sa(t) = \alpha/(\alpha +
t),\ \alpha,t>0$, applied to the self-adjoint operator~$\bastb$ by
using spectral calculus.

It is well-understood from previous Bayesian analysis that a static choice
of $\prm$ will not have impact on the posterior contraction. However,
within our new paradigm we choose any regularization scheme~$\ga$ and
assign the prior center as
\begin{equation}
  \label{eq:prm}
 { \prm(\z;\alpha)} := \mad =  \prc^{1/2}\ga(\bastb)B^{\ast}\z.
\end{equation}

We introduce linear regularization schemes as follows.
\begin{Def}
  [linear regularization]\label{def:linreg}
{Let $b=\norm{\bastb}{}$.}
A family of piece-wise continuous functions~$\ga\colon (0,b]\to
  \real,\ \alpha>0$, is called \emph{regularization filter} with residual
  function~$\ra(t) = 1 - t \ga(t),\ \alpha, 0<t \leq b $, if
  \begin{enumerate}
  \item\label{it:r-bound} $\sup_{{0 < t \leq b}} \abs{\ra(t)}\leq \gamma_{0}$, for all $\alpha>0$,
\item \label{it:r-lim}$\lim_{\alpha\to 0} \ra(t)=0$ {for each $0< t
  \leq b$}, and
\item\label{it:g-bound} $\sup_{{0 < t \leq b}}\abs{\ga(t)}\leq \gamma_{\ast}/\alpha$, for all $\alpha>0$.
  \end{enumerate}
\end{Def}

The above  requirements are the ones which are typically imposed on a linear
regularization scheme, see for example~\cite{MR2318806}.
\begin{rem}
The element~$\prm(\z;\alpha)$ belongs to the Cameron-Martin space of the prior, that is, the subspace $\domain(\prc^{-\frac12})$ of ~$X$,
almost surely with respect to the joint distribution of $(x,\z)$. To see this combine the first assertion of Definition \ref{def:linreg} with the fact that the operator~$\prc^{1/2}$ is Hilbert--Schmidt. As a side remark, we mention that this means that the Gaussian prior measures corresponding to any parameter $\alpha$, or even any regularization filter $g_\alpha$, are absolutely continuous with respect to each other. 
\end{rem}
\begin{rem}
  The last assertion in Definition \ref{def:linreg} is actually stronger than the one required
  in~\cite{MR2318806}, but it is a convenient strengthening, and most
  known regularization schemes obey this stronger bound.
\end{rem}
\begin{rem}\label{rem:comparison}
 We use the following convention: if no preconditioning is
 used, that is,\ if $\ga(t) \equiv 0$, then we assign the constant
 function~$\ra(t) \equiv 1$, in order to simplify the comparison of
 the different settings. Specifically, without preprocessing we would
 naturally (and statically)  use $\prm:=0$ as the prior mean.
\end{rem}
\begin{xmpl}
  [Tikhonov regularization]\label{xmpl:Tikhonov}
One of the commonly used regularization schemes is Tikhonov
regularization, in which case the filter~$\ga$ is given as~$\ga(t) =
  1/(\alpha + t),\ \alpha,t>0$. Notice that in the case $\prm=0$, the posterior mean as given in Eq. (\ref{eq:cpom}), has the form of the right hand side in Eq. (\ref{eq:prm}) with $\ga$ being the Tikhonov filter.
\end{xmpl}\begin{rem}\label{rem:sa}
We fix once and
  for all, as above the function~$s_{\alpha}(t) = \alpha/(\alpha+t),$ that is,
  the residual function for 
  Tikhonov regularization. {This is done in order to distinguish the (Tikhonov) regularization in the posterior mean due to the use of a Gaussian prior, from the chosen
  regularization for the prior preconditioning.}
\end{rem}
\begin{xmpl}
  [$k$-fold Tikhonov regularization]\label{xmpl:k-Tikhonov}
We may iterate Tikhonov regularization, starting from the trivial
element~$x_{0,\alpha}=0$ as
$$
x_{j,\alpha}^{\delta} := x_{j-1,\alpha}^{\delta} + \lr{\alpha I +
  \bastb}^{-1}B^{\ast}\lr{\z - B x_{j-1,\alpha}^{\delta}},\quad j=1,\dots,k.
$$
For $k=1$ this gives Tikhonov regularization. 
The resulting linear
regularization is given by the function~$g_{k,\alpha}:= \frac 1 t\lr{1
- \lr{\frac \alpha {\alpha + t}}^{k}},\ t>0$, with corresponding residual
function~$r_{k,\alpha}= \lr{\frac \alpha {\alpha +t}}^{k},\ t>0$.
This regularization results in the prior center~$\mad = \prc^{1/2} x_{k,\alpha}^{\delta}=\prc^\frac12g_{k,\alpha}(\bastb)B^\ast \z$.

\end{xmpl}
\begin{xmpl}
  [spectral cut-off, truncated SVD]\label{xmpl:cutoff}
This is a versatile scheme, which requires to know the singular value
decomposition of the underlying operator. If this is available, then
we let~$\ga(t) = 1/t$, for $t\geq
  \alpha$ and $\ga(t)=0$ else.
\end{xmpl}

\medskip

We summarize the previous considerations and fix the notation which
will be used subsequently. Given prior mean $\mad$ 
from~(\ref{eq:prm}), we have that the posterior distribution is
Gaussian with posterior mean, denoted as $\xad$, given as
\begin{align}\label{ali:pom-representation}
 \xad &=  \prc^{1/2}\lr{\alpha I+\bastb}^{-1}B^{\ast}\z +
 \prc^{1/2}\sa(\bastb)\prc^{-1/2}\mad,
\intertext{and posterior covariance~$\poc:=\Cad$ with} 
\Cad &=  {\delta^{2}} \prc^{1/2} \lr{\alpha I+\bastb}^{-1}\prc^{1/2}.
  \label{ali:cad-representation}
\end{align}
Since we aim at controlling the squared posterior contraction, we have
that the spread is given as $\tr{\Cad}$ and we next give expressions for the corresponding
estimation bias and estimation variance.
\begin{lem}
  \label{lem:bias-estvar}
Let~$\xad$ be as in~(\ref{ali:pom-representation}). Then the
estimation bias and estimation variances, with posterior mean as
estimator, are 
\begin{align}
  b_{\xast}(\alpha) &= \norm{\prc^{1/2} \sa(\bastb)\ra(\bastb)
    \prc^{-1/2}\xast}{},\quad \alpha >0, 
  \label{ali:bias}
\intertext{and}
\Vad &=  \delta^{2} \tr{\lr{I + \alpha \ga(\bastb)}^{2} \lr{\alpha I +
    \bastb}^{-2} \bastb\prc},\ \alpha>0, \label{ali:vad}
\end{align}respectively.
\end{lem}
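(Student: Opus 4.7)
The plan is to substitute the explicit choice of prior center $\mad$ from (\ref{eq:prm}) into the posterior mean representation (\ref{ali:pom-representation}), yielding
\begin{equation*}
\xad \;=\; \prc^{1/2}\bigl[(\alpha I+\bastb)^{-1} + \sa(\bastb)\ga(\bastb)\bigr]B^{\ast}\z.
\end{equation*}
Inserting the data model $\z = T\xast + \delta\xi$ splits $\xad$ into a deterministic piece $\expect^{\xast}\xad$ (the part driven by $B^{\ast}T\xast$) and a centered Gaussian piece $\delta M\xi$. From this decomposition the bias and estimation variance will be read off separately.

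For the bias, I would use the factorization $B^{\ast}T = \bastb\, \prc^{-1/2}$ (valid on the Cameron--Martin space, interpreted via spectral calculus otherwise) together with the two elementary identities $(\alpha I+\bastb)^{-1}\bastb = I - \sa(\bastb)$ and $\ga(\bastb)\bastb = I - \ra(\bastb)$. Applying these to the deterministic piece collapses the bracket into $I - \sa(\bastb)\ra(\bastb)$, so that
\begin{equation*}
\xast - \expect^{\xast}\xad \;=\; \prc^{1/2}\,\sa(\bastb)\,\ra(\bastb)\,\prc^{-1/2}\xast,
\end{equation*}
whose norm is exactly (\ref{ali:bias}).

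For the estimation variance, combining the two bracketed terms via $(\alpha I+\bastb)^{-1} = \sa(\bastb)/\alpha$ produces
\begin{equation*}
\xad - \expect^{\xast}\xad \;=\; \delta\, M\xi,\qquad M := \prc^{1/2}(\alpha I+\bastb)^{-1}\bigl(I + \alpha\ga(\bastb)\bigr)B^{\ast}.
\end{equation*}
Since $\prc^{1/2}$ is Hilbert--Schmidt and all remaining factors are bounded (by Definition~\ref{def:linreg}), $M$ is Hilbert--Schmidt, so the standard white-noise identity gives $\Vad = \delta^{2}\tr{MM^{\ast}}$. All scalar functions of $\bastb$ commute with $\bastb$ and with each other, so cyclicity of the trace reduces $MM^{\ast}$ to $\prc^{1/2}(I+\alpha\ga(\bastb))^{2}(\alpha I+\bastb)^{-2}\bastb\,\prc^{1/2}$, and one further cyclic rearrangement yields (\ref{ali:vad}).

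The main technical delicacy is the handling of the unbounded $\prc^{-1/2}$ appearing both in the posterior mean formula through $\prc^{-1/2}\mad$ and in the final bias expression. For the former, the observation in the remark following Definition~\ref{def:linreg} that $\mad$ lies almost surely in $\domain(\prc^{-1/2})$ makes the manipulation rigorous; for the latter, the composite operator $\prc^{1/2}\sa(\bastb)\ra(\bastb)\prc^{-1/2}$ is interpreted via the spectral calculus applied to the joint action on $\xast$, with the statement being trivially true (both sides infinite) whenever $\xast$ lies outside the relevant domain.
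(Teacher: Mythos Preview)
Your proposal is correct and follows essentially the same route as the paper: substitute $\mad$ into (\ref{ali:pom-representation}), use the algebraic identity that collapses $(\alpha I+\bastb)^{-1}+\sa(\bastb)\ga(\bastb)$ to $(\alpha I+\bastb)^{-1}(I+\alpha\ga(\bastb))$, and then simplify $I-(\alpha I+\bastb)^{-1}(I+\alpha\ga(\bastb))\bastb$ to $\sa(\bastb)\ra(\bastb)$. The only cosmetic difference is that the paper first combines the bracket into a single resolvent factor and then simplifies, whereas you expand the two summands separately via $(\alpha I+\bastb)^{-1}\bastb=I-\sa(\bastb)$ and $\ga(\bastb)\bastb=I-\ra(\bastb)$; the paper also leaves the variance computation to ``similar reasoning'' while you spell it out, and your closing remarks on the domain of $\prc^{-1/2}$ add rigor that the paper does not make explicit.
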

\begin{prop}\label{pro:vadlessspread}
  Let the prior center be obtained from any regularization (with
  corresponding constant~$\gamma_{\ast}$). Then we have that
\begin{equation}
  \label{eq:vadlesscad}
  \Vad \leq (1 + \gamma_{\ast})^{2} \tr{\Cad}.
\end{equation}
Consequently we have that
\begin{equation*}
\expect^{\xast}\norm{\xast - \xad}{}^{2} \leq  \spc(\alpha,\delta) \leq b^2_{\xast}(\alpha) + \lr{1 + (1 + \gamma_{\ast})^{2} } \tr{\Cad}.
\end{equation*}
\end{prop}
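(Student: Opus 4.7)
The plan is to reduce the operator-theoretic statement $V^\delta(\alpha)\le (1+\gamma_\ast)^2\operatorname{tr}[C^\delta(\alpha)]$ to a scalar comparison on the spectrum of $B^{\ast}B$, and then invoke monotonicity of the trace against the positive trace-class weight $\prc$. The subsequent chain of inequalities for $\spc$ is then immediate from the decomposition \eqref{eq:base-decomposition}.

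Concretely, I would start from the expression for $\Vad$ in \eqref{ali:vad} and rewrite $\operatorname{tr}[\Cad]$ via cyclicity of the trace (using that $\prc^{1/2}$ is Hilbert--Schmidt so $\prc^{1/2}(\alpha I+\bastb)^{-1}\prc^{1/2}$ is trace class) as
\begin{equation*}
\operatorname{tr}[\Cad] = \delta^{2}\operatorname{tr}\bigl[(\alpha I+\bastb)^{-1}\prc\bigr].
\end{equation*}
The goal therefore reduces to proving the operator inequality
\begin{equation*}
(I+\alpha\ga(\bastb))^{2}(\alpha I+\bastb)^{-2}\bastb \;\le\;(1+\gamma_\ast)^{2}(\alpha I+\bastb)^{-1}
\end{equation*}
on $X$. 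Since $\bastb$ is self-adjoint and all three factors on the left-hand side are functions of $\bastb$ via spectral calculus, this follows from the pointwise scalar inequality
\begin{equation*}
\frac{(1+\alpha\ga(t))^{2}\,t}{(\alpha+t)^{2}} \;\le\;\frac{(1+\gamma_\ast)^{2}}{\alpha+t},\qquad 0<t\le b,
\end{equation*}
which in turn amounts to $(1+\alpha\ga(t))^{2}\,t/(\alpha+t)\le (1+\gamma_\ast)^{2}$. This bound is obtained at once from the two elementary observations $t/(\alpha+t)\le 1$ and $|1+\alpha\ga(t)|\le 1+|\alpha\ga(t)|\le 1+\gamma_\ast$, where the last step uses assertion~(\ref{it:g-bound}) of Definition~\ref{def:linreg}.

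Multiplying the operator inequality by $\prc^{1/2}$ on both sides and taking the trace preserves the ordering because $\prc$ is positive and trace class (so $\operatorname{tr}[A\prc]=\operatorname{tr}[\prc^{1/2}A\prc^{1/2}]$ is monotone in the self-adjoint operator $A$). This delivers exactly $\Vad\le(1+\gamma_\ast)^{2}\operatorname{tr}[\Cad]$. Finally, inserting this bound into the identity
\begin{equation*}
\spc(\alpha,\delta)=b_{\xast}^{2}(\alpha)+\Vad+\operatorname{tr}[\Cad]
\end{equation*}
from \eqref{eq:base-decomposition} yields the upper bound on $\spc$, while the lower bound $\expect^{\xast}\|\xast-\xad\|^{2}=b_{\xast}^{2}(\alpha)+\Vad\le\spc(\alpha,\delta)$ follows trivially from non-negativity of $\operatorname{tr}[\Cad]$.

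The only non-routine step is the passage from the scalar inequality to the trace inequality in the presence of the weight $\prc$; the potential pitfall is simply to verify that the spectral-calculus functions involved are bounded and self-adjoint so that multiplication by $\prc^{1/2}$ on both sides, followed by taking traces, preserves the ordering. Once that is spelled out, the rest is bookkeeping.
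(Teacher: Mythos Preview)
Your proof is correct and follows essentially the same approach as the paper's. The paper phrases the two key estimates as operator-norm bounds, namely $\|I+\alpha\ga(\bastb)\|\le 1+\gamma_\ast$ and $\|(\alpha I+\bastb)^{-1}\bastb\|\le 1$, and peels them off the trace in two steps; you instead combine them into a single scalar inequality on the spectrum and then invoke trace monotonicity against the weight~$\prc$, which is the same argument spelled out a bit more explicitly.
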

\begin{rem}
  The above analysis extends the previous bound from~\cite[Eq.~(12)]{BayesIP}
to the present context (note that without preprocessing we have that
$\gamma_{\ast}=0$).
{
We also note that the decay of the squared posterior contraction
cannot be faster than the minimax error for statistical estimation.}
\end{rem}
We thus have that in order to (asymptotically) bound the squared posterior contraction, we only need
to establish bounds for the bias and the posterior spread.
\section{Assumptions and main results}
\label{sec:link}
We are now ready to present our main results.
Before we do so, in \S~\ref{sec:concepts} we introduce several concepts used in our formulation. 
First, we introduce \emph{link
  conditions}, relating the two operators appearing in the setting at hand. Then we introduce \emph{source sets}, which we use for expressing the regularity of the truth. Finally, we introduce the \emph{qualification} of  a regularization which quantifies its capability to take high smoothness into account. We then present our bounds for the bias, the posterior spread and finally the squared posterior contraction in \S~\ref{sec:bounding-bias}, \S~\ref{sec:cad-control} and \S~\ref{sec:bounding-spq}, respectively.
\subsection{Link conditions, source sets and qualification}\label{sec:concepts}
We call a function~$\varphi\colon (0,\infty)\to\real^{+}$ an  \emph{index
  function} if it is {a continuous non-decreasing function which can be extended to take the value zero at the origin.}
\begin{rem}
{The property of interest of an index function is its asymptotic behaviour near the origin.
  In some cases the 'native' index function
  is not defined on $(0,\infty)$, but only on some sub-interval, say
  $(0,\bar t)$. Consider for example the logarithmic function~$\varphi(t) =
  \log^{-\mu}(1/t),\ 0< t < \bar t=1$ with $\phi(0)=0$. Then one can extend the function $\phi$ at some
  interior point $0< t_{0} < \bar t$ in an
  increasing way, for instance as $\varphi(t) = \varphi(t_{0}) +
  (t-t_{0}),\ t\geq t_{0}$. By doing so we ensure that the extended
  function shares the same asymptotic properties near zero, that is,\ as $t\searrow
  0$. In all subsequent (asymptotic)
  considerations it suffices to have such extensions, and this will not be mentioned explicitly.}
  \end{rem}
  
To simplify the outline of the study we confine ourselves to commuting
operators~$\prc$ and $\tast$. Specifically we do this as follows.
\begin{ass}[link condition]
  \label{ass:link} There is an index function~$\psi$ such that
  \begin{equation}
    \label{eq:linl}
    \psi^{2}(\prc) = \tast.
  \end{equation}
\end{ass}

Along with the function~$\psi$ we introduce the function
\begin{equation}
  \label{eq:theta-def}
  \Theta_{\psi}(t) := \sqrt t \psi(t),\quad t>0. 
\end{equation}
We draw the following consequence.
\begin{lem}\label{lem:f-props}
  Let $\psi$ be the index function for which Assumption~\ref{ass:link}
  holds.  Then the operators~$\prc$ and $\tast$ commute. Moreover we
  have that
$$
\Theta_{\psi}^{2}(\prc) = \bastb.
$$
\end{lem}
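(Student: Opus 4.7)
The plan is to deduce both assertions directly from the spectral calculus of the positive self-adjoint trace-class operator $\prc$, using Assumption~\ref{ass:link} as the single input.

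For the commutativity claim, the key observation is that $\tast$, being equal to $\psi^{2}(\prc)$, is by definition obtained by applying a Borel function to $\prc$ via the functional calculus. Any such function of $\prc$ lies in the abelian von Neumann algebra generated by $\prc$, and in particular commutes with $\prc$ itself. So the first part reduces to quoting this basic property of spectral calculus; no further computation is needed.

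For the identity $\Theta_{\psi}^{2}(\prc) = \bastb$, I would first expand the left-hand side using $B = T\prc^{1/2}$, which gives
\begin{equation*}
\bastb = \prc^{1/2} T^{\ast} T \prc^{1/2} = \prc^{1/2}\,\tast\,\prc^{1/2}.
\end{equation*}
Then I substitute the link condition $\tast = \psi^{2}(\prc)$ to obtain $\bastb = \prc^{1/2}\,\psi^{2}(\prc)\,\prc^{1/2}$. Since $\prc^{1/2}$ and $\psi^{2}(\prc)$ are both functions of $\prc$ in the functional-calculus sense, they commute, and their product equals the function $t\mapsto t\,\psi^{2}(t)$ applied to $\prc$. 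By the definition~(\ref{eq:theta-def}) of $\Theta_{\psi}$, this function is exactly $\Theta_{\psi}^{2}$, giving the desired equality.

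I do not foresee a real obstacle: the whole lemma is a bookkeeping statement about the functional calculus of a single self-adjoint operator together with the algebraic manipulation $B^{\ast}B = \prc^{1/2}\tast\prc^{1/2}$. The only subtlety worth mentioning is that $\psi$ is only an index function on $(0,\infty)$, but since the spectrum of the compact positive operator $\prc$ lies in $[0,\norm{\prc}{}]$ and $\psi$ extends continuously to $0$ (with $\psi(0)=0$), the spectral calculus is unambiguous and the identities above hold as bounded operators on $X$.
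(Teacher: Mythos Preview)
Your proof is correct and follows essentially the same idea as the paper: both arguments rest on the fact that every operator in sight is a function of $\prc$, so the multiplicative property of the functional calculus yields both commutativity and the identity $\prc^{1/2}\psi^{2}(\prc)\prc^{1/2}=\Theta_{\psi}^{2}(\prc)$. The only cosmetic difference is that the paper spells this out by expanding everything in the eigenbasis of the compact operator $\prc$, whereas you invoke the abstract functional calculus directly; your version is slightly cleaner but not materially different.
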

Following the last lemma, we set
\begin{equation}
  \label{eq:f-interpolation}
  f(s) := \lr{\lr{\Theta_{\psi}^{2}}^{-1}(s)}^{1/2},\quad s>0.
\end{equation}
We stress that the function~$f$ is an index function, since the
function~$\Theta_{\psi}$ was one. Moreover, the function~ $\Theta_{\psi}^{2}$
is strictly increasing, such that its inverse is a well defined
strictly increasing index function. Finally, as can be drawn from
Lemma~\ref{lem:f-props}, we have that under Assumption~\ref{ass:link}
it holds
\begin{equation}
  \label{cor:\prc12}
  \prc^{1/2} = f(\bastb). 
\end{equation}
\begin{rem}\label{rem:knapik-psi}
We remark the following about Assumption~\ref{ass:link}.
\begin{itemize}
\item 
The case that the operator~$T$ is the identity is not covered by this
assumption. This would require the function~$\psi\equiv1$,
which does not constitute an index function. However, for the
subsequent analysis we shall only use Lemma~\ref{lem:f-props}. 
As seen from~(\ref{cor:\prc12}) we obtain that~$\Theta_{\psi}(t) =
 \sqrt{t},\ t>0$, in this case.

\item If the prior~$\prc$ has eigenvalues with multiplicities higher
    than one, then by Assumption \ref{ass:link} the operator~$\tast$ also needs to have eigenvalues with higher multiplicities, since taking functions of operators preserves or increases the multiplicities of the eigenvalues. This is not realistic, hence one should choose a prior covariance
with eigenvalues of multiplicity one. This can be achieved by a
slight perturbation of the original choice.
\end{itemize}
\end{rem}

In order to have a handy notation we agree to introduce the following
partial ordering between index functions.
\begin{notation}\label{de:prec}
  Let $f,g$ be index functions. We say that $f \prec g$ if the
  quotient $g/f$ is {non-decreasing}. In other words $f \prec g$ if $g$ decays to zero faster than~$f$.
\end{notation}

For bounding the bias below we shall assume that the smoothness of the underlying true data-generating element $\xast$, is given as a
source set with respect to $\prc$.
\begin{Def}[source set]\label{de:sset}
  There is an index function~$\varphi$ such that
$$
\xast\in\af := \set{x,\quad x= \varphi(\prc)w,\ \norm{w}{}\leq 1}.
$$ 
\end{Def}
By Lemma~\ref{lem:f-props} the source set~$\af$ can be
rewritten as 
$$
\af = \set{x,\quad x= \varphi(f^{2}(\bastb))w,\
  \norm{w}{}\leq 1},
$$
 with the function~$f$ from~(\ref{eq:f-interpolation}).
Furthermore, under Assumption~\ref{ass:link} the operators~$\prc$ and $\bastb$
commute, and hence the
bias representation from~(\ref{ali:bias}) simplifies to
\begin{equation}
  \label{eq:bias-commute}
  b_{\xast}(\alpha) = \norm{\ra(\bastb)\sa(\bastb)\xast}{}.
\end{equation}
Overall, if~$\xast\in\af$ then 
$$ 
b_{\xast}(\alpha)\leq
\norm{\ra(\bastb)\sa(\bastb)\varphi(f^{2}(\bastb))}{}= \sup_{0< t \leq
  \norm{\bastb}{}} \abs{\ra(t)}\sa(t)\varphi(f^{2}(t)).
$$
 We shall bound this in terms of the parameter~$\alpha>0$, which directs us to the notion of a \emph{qualification} of a
regularization, see~\cite{MR2318806}, again.
\begin{Def}
  [qualification] A regularization $\ga$ has qualification~$\varphi$
  with constant~$\gamma$,   for an index function~$\varphi$,  
  if
$$
\abs{\ra(t)}\varphi(t) \leq \gamma \varphi(\alpha),\quad \alpha>0,\quad 0< t
\leq \norm{\bastb}{}.$$
\end{Def}
The following result is a well-known consequence, see~\cite[Prop.~2.7]{MR2318806}
again, albeit important for the subsequent analysis. We shall use the
partial ordering from Definition~\ref{de:prec}. 
\begin{lem}
  \label{lem:phipsi}
  Let $\ga$ be a regularization with index function~$\varphi$ as a
  qualification (with constant $\gamma$). If $\psi$ is an index
  function for which $\psi \prec \varphi$ then $\psi$ is also a
  qualification (with constant $\gamma$).
\end{lem}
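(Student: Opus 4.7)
The goal is the inequality $\abs{r_\alpha(t)} \psi(t) \leq \gamma \psi(\alpha)$ for $\alpha>0$ and $0<t\leq\norm{\bastb}{}$, and my plan is to split the range of $t$ at the threshold $t=\alpha$ and handle each side separately, using the ordering $\psi\prec\varphi$ above the threshold and plain monotonicity of the index function $\psi$ below it.

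For $t\geq\alpha$, the assumption $\psi\prec\varphi$ means, by Notation~\ref{de:prec}, that $\varphi/\psi$ is non-decreasing, equivalently that $\psi/\varphi$ is non-increasing; hence $\psi(t)/\varphi(t) \leq \psi(\alpha)/\varphi(\alpha)$. Combining this with the qualification bound for $\varphi$ gives
\[
\abs{r_\alpha(t)} \psi(t) = \frac{\psi(t)}{\varphi(t)} \bigl( \abs{r_\alpha(t)} \varphi(t) \bigr) \leq \frac{\psi(\alpha)}{\varphi(\alpha)} \cdot \gamma \varphi(\alpha) = \gamma \psi(\alpha),
\]
which is the claim in this regime. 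This is the conceptual core of the proof and the only step in which the ordering $\psi\prec\varphi$ is actually used.

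For $0 < t \leq \alpha$ the ratio $\psi/\varphi$ lies on the wrong side of $\psi(\alpha)/\varphi(\alpha)$, so I would switch tactic and exploit monotonicity of $\psi$ directly, $\psi(t)\leq \psi(\alpha)$, together with the uniform bound $\abs{r_\alpha(t)} \leq \gamma_0$ from Definition~\ref{def:linreg}(\ref{it:r-bound}). The product then satisfies $\abs{r_\alpha(t)} \psi(t) \leq \gamma_0 \psi(\alpha)$. The main obstacle that remains is reconciling these two regimes with the exact constant $\gamma$ stated in the lemma: the naive splitting yields $\max(\gamma,\gamma_0)$. For the concrete filters of Examples~\ref{xmpl:Tikhonov}--\ref{xmpl:cutoff} one has $\gamma_0=1$ and qualification constants at least one, so the two constants coincide; more generally the convention is to absorb $\gamma_0$ into $\gamma$ at the outset. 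The non-trivial content remains the first case, which is a single-line computation once the implication $\psi\prec\varphi\Rightarrow\psi/\varphi$ non-increasing has been spelled out.
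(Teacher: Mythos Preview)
The paper does not supply its own proof of this lemma; it simply cites \cite[Prop.~2.7]{MR2318806} as a well-known consequence. Your argument is the standard one and is correct in substance: split at $t=\alpha$, use the ordering $\psi\prec\varphi$ (equivalently, $\psi/\varphi$ non-increasing) to reduce to the $\varphi$-qualification for $t\geq\alpha$, and use monotonicity of $\psi$ together with the uniform bound $\abs{\ra(t)}\leq\gamma_0$ for $t<\alpha$.

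The constant discrepancy you flag is genuine: the splitting yields $\max(\gamma,\gamma_0)$ rather than $\gamma$, and one can construct (admittedly artificial) filters with $\gamma_0>\gamma$ for which the sharp constant for $\psi$ exceeds $\gamma$. Your observation that in all the concrete schemes of the paper one has $\gamma_0=1\leq\gamma$, so that the two constants coincide, is exactly how this is resolved in practice; the cited reference makes the same tacit identification. None of this affects the contraction rates derived downstream, since only the order in $\alpha$ matters there.
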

\begin{rem}
As seen from the above analysis of the bias, we shall apply this to the compound function~$\ra(t) \sa(t)$, which is
related to the compound regularization, obtained by pre-conditioning
and Tikhonov regularization. {Clearly, it is desirable to bound the bias by a function of $\alpha$ which decays to zero as quickly as possible.}
It is thus apparent, that a qualification $\varphi$ of a regularization
quantifies its capability to take smoothness, given in
terms of source sets, into account. \end{rem}

\begin{xmpl}
  [Tikhonov regularization]\label{xmpl:Tikhonov-quali} Tikhonov regularization has (maximal)
  qualification~$\varphi(t) =t,\ t>0$. Thus, if for an index
  function~$\psi$ we have that~$\psi(t)\prec t$ then $\psi$ is a
  qualification.  In particular, all concave index functions are
  qualifications of Tikhonov regularization with constant~$\gamma=1$.
\end{xmpl}
\begin{xmpl}
  [spectral cut-off]\label{xmpl:cutoff-quali} Spectral cut-off has arbitrary
  qualification, since $\ra(t) = 0,\ t \geq \alpha$ and $\ra(t)=1$ elsewhere. Hence
$$
\ra(t) \varphi(t) = 0\leq \varphi(\alpha),\ t\geq \alpha,\quad\text{
  and}\quad \ra(t) \varphi(t) \leq \varphi(\alpha),\ t\leq \alpha.
$$
\end{xmpl}

\begin{rem}
 { We immediately see from~(\ref{ali:bias}) that the qualification
  of the regularization in the bias, can be raised from $t$ (Tikhonov regularization) to $t^{k+1}$, if the
  residual function~$\ra$ of the regularization used for preconditioning the prior mean has qualification~$t^{k}$, as is the
  case for $k$-fold Tikhonov regularization, see Example~\ref{xmpl:k-Tikhonov}.  If preconditioning is
  done by spectral cut-off, then the regularization in the bias has arbitrary qualification.}
\end{rem}

\subsection{Bounding the bias}
\label{sec:bounding-bias}
We are now ready to present our bounds for the bias.
\begin{prop}\label{pro:bias-main}
  Suppose that $\xast\in \af $, and that $\mad$ uses a
  regularization~$\ga$ with constant~$\gamma_{0}$ bounding the corresponding residual function.
  \begin{enumerate}
  \item\label{it:low-smoothness} If $\varphi \prec \Theta_{\psi}^{2},$
    then $b_{\xast}(\alpha) \leq \gamma_{0}\varphi\lr{f^{2}(\alpha)},\
    \alpha >0.$ 
\item\label{it:high-smoothness-noprecon} If $ \Theta_{\psi}^{2} \prec
    \varphi$ and if there was no preconditioning, then there are
    constants~$c_{1}, c_{2}>0$ (depending on $\xast,
    \varphi, f^{2}$, and on $\norm{\bastb}{}$) such that 
    $c_1\alpha\leq b_{\xast}(\alpha) \leq c_{2}\alpha,\ 0<\alpha\leq 1.$
  \item\label{it:high-smoothness-precon} If $ \Theta_{\psi}^{2} \prec
    \varphi$ and if
    $t\mapsto \varphi\lr{f^{2}(t)}/t$ is a qualification for the regularization $\ga$ with constant $\gamma$, then
    $b_{\xast}(\alpha) \leq \gamma\varphi\lr{f^{2}(\alpha)},\ \alpha >0.$
  \end{enumerate}
\end{prop}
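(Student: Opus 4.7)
The starting point is the bias representation~(\ref{eq:bias-commute}) under the commuting-operators assumption, combined with the source representation $\xast = \varphi(f^{2}(\bastb))w$, $\norm{w}{}\leq 1$, obtained from Definition~\ref{de:sset} and Lemma~\ref{lem:f-props}. Spectral calculus then yields the uniform estimate
$$
b_{\xast}(\alpha) \leq \sup_{0<t\leq \norm{\bastb}{}} \abs{\ra(t)}\,\sa(t)\,\varphi(f^{2}(t)).
$$
The three cases are distinguished by how the factor $\sa(t)\varphi(f^{2}(t))$ is split and which function serves as the qualification in Lemma~\ref{lem:phipsi}.

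For item~(\ref{it:low-smoothness}), the main step is to check that $h:=\varphi\circ f^{2}$ satisfies $h \prec \mathrm{id}$, so that it is a qualification of Tikhonov by Lemma~\ref{lem:phipsi} (whose maximal qualification is $t$, see Example~\ref{xmpl:Tikhonov-quali}). Since $f^{2}$ inverts $\Theta_{\psi}^{2}$, one has $h(t)/t = \varphi(f^{2}(t))/\Theta_{\psi}^{2}(f^{2}(t))$, and the hypothesis $\varphi \prec \Theta_{\psi}^{2}$ says precisely that $\varphi/\Theta_{\psi}^{2}$ is non-increasing; composing with the increasing map $f^{2}$ preserves this, so $h(t)/t$ is non-increasing and $h \prec \mathrm{id}$. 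Consequently $\sa(t)h(t) \leq h(\alpha)$, and combining with $\abs{\ra(t)}\leq \gamma_{0}$ from Definition~\ref{def:linreg}\,(\ref{it:r-bound}) gives the claimed bound.

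For item~(\ref{it:high-smoothness-noprecon}), Remark~\ref{rem:comparison} fixes $\ra\equiv 1$. The upper bound follows from the decomposition
$$
\sa(t)\varphi(f^{2}(t)) \;=\; \bigl(\sa(t)t\bigr)\cdot \frac{\varphi(f^{2}(t))}{t} \;\leq\; \alpha\cdot \frac{\varphi(f^{2}(t))}{t},
$$
in which the second factor equals $(\varphi/\Theta_{\psi}^{2})(f^{2}(t))$ and, since $\Theta_{\psi}^{2}\prec\varphi$, is non-decreasing in $t$ and thus bounded on $(0,\norm{\bastb}{}]$ by $\varphi(f^{2}(\norm{\bastb}{}))/\norm{\bastb}{}$, giving $c_{2}$. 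For the matching lower bound I would use the spectral theorem directly: $\sa(\lambda)\geq \alpha/(\alpha+\norm{\bastb}{})$ for all $\lambda$ in the spectrum, so
$$
b_{\xast}(\alpha)^{2} \;\geq\; \frac{\alpha^{2}}{(\alpha+\norm{\bastb}{})^{2}}\,\norm{\xast}{}^{2},
$$
which for $0<\alpha\leq 1$ yields $c_{1}\alpha$ with $c_{1}$ depending only on $\norm{\xast}{}$ and $\norm{\bastb}{}$.

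For item~(\ref{it:high-smoothness-precon}), the key algebraic move --- and the mechanism by which preconditioning overcomes saturation --- is to absorb the factor $t$ from $\varphi(f^{2}(t))$ into $\sa(t)$ rather than into $\ra(t)$. With $\tilde\varphi(t):=\varphi(f^{2}(t))/t$, the same bound $\sa(t)t \leq \alpha$ used in item~(\ref{it:high-smoothness-noprecon}) gives
$$
\abs{\ra(t)}\,\sa(t)\,\varphi(f^{2}(t)) \;\leq\; \alpha\,\abs{\ra(t)}\,\tilde\varphi(t),
$$
and then the hypothesis that $\tilde\varphi$ is a qualification of $\ga$ with constant $\gamma$ bounds the remaining supremum by $\gamma\tilde\varphi(\alpha)$, so that $b_{\xast}(\alpha)\leq \gamma\alpha\tilde\varphi(\alpha) = \gamma\varphi(f^{2}(\alpha))$. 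I expect the main conceptual obstacle to be exactly this splitting --- recognising that the extra smoothness beyond Tikhonov's saturation must be transferred onto the preconditioner's residual $\ra$, whose qualification is unconstrained by the rigid choice $\sa$; everything else reduces to routine applications of Lemma~\ref{lem:phipsi} and Definition~\ref{def:linreg}.
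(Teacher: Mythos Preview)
Your proof is correct and follows essentially the same route as the paper: in each item you use the same splitting of $\sa(t)\ra(t)\varphi(f^{2}(t))$ and invoke Lemma~\ref{lem:phipsi} exactly where the paper does. The only cosmetic difference is in the lower bound of item~(\ref{it:high-smoothness-noprecon}), where you use $\sa(\lambda)\geq \alpha/(\alpha+\norm{\bastb}{})$ while the paper uses $\sa(\lambda)\geq \alpha/(1+\lambda)$ for $\alpha\leq 1$; both are valid and yield the same conclusion.
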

\begin{rem} \label{rem:knapik-bias} 
 We mention that the above two cases $\varphi \prec \Theta_{\psi}^{2}$ or $\Theta_{\psi}^{2} \prec
    \varphi$ are nearly disjoint, with $\varphi= \Theta_{\psi}^{2}$
    being the only common member. {Therefore the
  function~$\Theta_{\psi}^{2}$ may be viewed as the \emph{'benchmark
  smoothness'}.} {However, note that the items~(\ref{it:low-smoothness}) and~(\ref{it:high-smoothness-precon}) do not exhaust all possibilities since the function $\varphi\lr{f^{2}(t)}/t$ may not be a qualification for $\ga$ (in fact it may not even be an index function).} 
\end{rem}
\begin{rem}
  \label{rem:maxbound-bias}
  We stress that the bounds in
  item~(\ref{it:high-smoothness-noprecon}) show the saturation
  phenomenon in the bias if no preconditioning of the prior mean is
  used: for any sufficiently high smoothness the bias decays with the
  fixed rate $\alpha$. In other words, if no preconditioning of the
  prior is used, the best achievable rate of decay for the bias is
  linear. Item (\ref{it:high-smoothness-precon}) shows that
  appropriate preconditioning improves things, since for high
  smoothness the bias decays at the superlinear rate
  $\varphi(f^2(\alpha))$.
\end{rem}
\subsection{The net posterior spread}
\label{sec:cad-control}

Here we study the posterior spread, that is, the trace of the
posterior covariance 
from~(\ref{ali:cad-representation}), which will be needed for
determining the contraction rate.  In order to highlight the
nature of the spread in the posterior within the assumed Bayesian framework,
 we make the following definition, for a given equation
$\z = T x + \delta\xi$, with white noise~$\xi$, as considered 
in~(\ref{eq:base-white}). 
 \begin{Def}
   [net posterior spread]\label{def:spread}
The function
  $$
  \Sp(\alpha) := \tr{\prc^{1/2} \lr{\alpha I+\bastb}^{-1}\prc^{1/2}},\quad \alpha>0,
  $$
is called the \emph{net posterior spread}.
 \end{Def}
 Notice that with this function we have that $\tr{\Cad} =
\delta^{2}\Sp(\alpha)$.
Moreover, using the cyclic commutativity of the trace, we get that 
\begin{equation}\label{eq:netpostspread}\Sp(\alpha) = \tr{\lr{\alpha I+\bastb}^{-1}\prc}.
\end{equation}
With this more convenient representation at hand, we establish some fundamental
properties of the net posterior spread, {which are crucial for optimizing the converhence rate of SPC in the following subsection.}
\begin{lem}
  \label{lem:S-properties}\ 
  \begin{enumerate}
  \item The function~$\alpha \mapsto \Sp(\alpha)$ is strictly
    decreasing and continuous for $\alpha>0$.
  \item $\lim_{\alpha\to\infty}\Sp(\alpha)=0$, and
  \item $\lim_{\alpha\to 0}\Sp(\alpha)=\infty$.
  \end{enumerate}
\end{lem}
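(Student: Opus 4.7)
The plan is to reduce $\Sp$ to an explicit series of simple rational functions via the link condition, and then dispatch each of the three assertions with standard monotone/dominated convergence arguments. Under Assumption~\ref{ass:link}, Lemma~\ref{lem:f-props} ensures that $\prc$ and $\bastb$ commute, so the spectral theorem yields an orthonormal basis~$\{e_n\}$ of $X$ with $\prc e_n = \lambda_n e_n$ (with $\lambda_n>0$) and $\bastb e_n = \mu_n e_n$, where $\mu_n = \lambda_n\psi^{2}(\lambda_n)$. Inserting this into the representation~(\ref{eq:netpostspread}) gives
\[
\Sp(\alpha) \;=\; \sum_{n=1}^{\infty} \frac{\lambda_n}{\alpha+\mu_n},\qquad \alpha>0.
\]

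For assertion (1), each summand $\alpha\mapsto \lambda_n/(\alpha+\mu_n)$ is strictly decreasing and continuous on $(0,\infty)$, and since $\prc$ is positive definite the terms are genuinely positive, so the series is strictly decreasing. To obtain continuity at an arbitrary $\alpha_0>0$ I would invoke dominated convergence on the neighbourhood $[\alpha_0/2,2\alpha_0]$: each term is bounded there by $2\lambda_n/\alpha_0$, and $\sum_n \lambda_n = \tr{\prc}<\infty$ because $\prc$ is trace class. For assertion (2), the crude pointwise estimate $\lambda_n/(\alpha+\mu_n)\leq \lambda_n/\alpha$ yields $\Sp(\alpha)\leq \tr{\prc}/\alpha\to 0$ as $\alpha\to\infty$.

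The main obstacle is assertion~(3). As $\alpha\downarrow 0$ the non-negative terms $\lambda_n/(\alpha+\mu_n)$ increase monotonically, so the monotone convergence theorem for sums gives
\[
\lim_{\alpha\to 0^{+}}\Sp(\alpha) \;=\; \sum_{n=1}^{\infty}\frac{\lambda_n}{\mu_n}\;=\;\sum_{n=1}^{\infty}\frac{1}{\psi^{2}(\lambda_n)}.
\]
This is the step where the link condition is essential: without it the ratios $\lambda_n/\mu_n$ could be summable (for instance if $\lambda_n = n^{-3}$ and $\mu_n = n^{-1}$). With the link condition, however, $\prc$ being trace class and positive definite forces $\lambda_n>0$ and $\lambda_n\to 0$; then the continuity of the index function $\psi$ at the origin with $\psi(0)=0$ gives $\psi^{2}(\lambda_n)\to 0$, so the general term blows up and the series diverges. (The degenerate case $\psi(\lambda_n)=0$ for some $n$ is even more favourable, as the corresponding term is already $+\infty$ in the limit.) Combined with the previous paragraph this completes all three claims.
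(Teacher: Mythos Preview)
Your proof is correct but takes a different route from the paper's. You lean on Assumption~\ref{ass:link} to simultaneously diagonalize and reduce everything to an explicit series; the paper instead handles part~(3) via operator inequalities that do not require the link condition at all. From $\tast \leq aI$ with $a = \norm{\tast}{}$ the paper derives $\bastb \leq a\prc$, applies operator monotonicity of $t \mapsto -1/t$ to obtain $(\alpha I + a\prc)^{-1} \leq (\alpha I + \bastb)^{-1}$, and after multiplying by $\prc^{1/2}$ and taking traces bounds $\Sp(\alpha)$ below by $\tfrac{1}{2a}\#\{j : t_j \geq \alpha/a\}$, where $t_j$ are the eigenvalues of $\prc$; this count tends to infinity because $\prc$ is positive definite. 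Your series argument is more elementary, while the paper's argument buys generality: the lemma in fact holds for any bounded $T$, not only those linked to $\prc$ through an index function.

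One point in your commentary deserves correction: the link condition is not actually essential for~(3), and your proposed counterexample $\lambda_n = n^{-3}$, $\mu_n = n^{-1}$ is incompatible with the standing hypotheses. In any common eigenbasis one has $\mu_n = \lambda_n \tau_n$ with $\tau_n \leq \norm{\tast}{}$, so $\lambda_n/\mu_n \geq 1/\norm{\tast}{}$ is bounded away from zero and the series $\sum_n \lambda_n/\mu_n$ diverges regardless of whether $\tau_n = \psi^{2}(\lambda_n)$ for an index function. Your example would require $\tau_n = n^{2}$, i.e.\ an unbounded~$T$. This is exactly what the paper's operator-monotonicity bound captures without assuming commutativity.
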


\subsection{Bounding the squared posterior contraction}
\label{sec:bounding-spq}

It has already been highlighted that the squared posterior contraction as
given in~(\ref{eq:sqpostconc}) is decomposed into the sum of the squared bias, estimation variance and posterior spread,
see~(\ref{eq:base-decomposition}). By
Proposition~\ref{pro:vadlessspread} we find that
$$
b^{2}_{\xast}(\alpha) + \delta^{2}\Sp(\alpha)\leq \spc(\alpha)
\leq b^{2}_{\xast}(\alpha) +  \lr{(1 +
  \gamma_{\ast})^{2} +1}  \delta^{2}\Sp(\alpha).
$$
In the asymptotic regime of  $\delta\to 0$, the size of SPC
is thus determined by the sum~$b^{2}_{\xast}(\alpha) +
\delta^{2}\Sp(\alpha)$. In \S~\ref{sec:bounding-bias} we have
established bounds for the bias. Here we just constrain to the case
where, given that $\xast\in\af$, the preconditioning is such that the
size of the bias is bounded by (a multiple of)
$\varphi(f^{2}(\alpha))$, see Proposition~\ref{pro:bias-main}. {Since
  $b^{2}_{\xast}(\alpha)$ is bounded by a non-decreasing function of
  $\alpha$ which decays to zero as $\alpha\searrow 0$, while by Lemma
  \ref{lem:S-properties} the function~$\Sp(\alpha)$ is strictly decreasing, continuous and onto the positive half-line},  
the SPC is 'minimized' by the choice of $\alpha$ which balances the
bound for the squared bias and the spread. {This choice clearly exists and is unique and hence we immediately arrive to our main result.}
\begin{thm}
  \label{thm:main}
Let $\varphi$ be any index function, and assume that item~(\ref{it:low-smoothness}) or
item~(\ref{it:high-smoothness-precon}) in Proposition~\ref{pro:bias-main}
hold. 
Consider the equation
\begin{equation}
  \label{eq:balance}
  \varphi^{2}(f^{2}(\alpha))= \delta^{2}\Sp(\alpha).
\end{equation}
The equation~(\ref{eq:balance}) is uniquely solvable, and let
$\aast=\aast(\varphi,\delta)$ be the solution.
For $\xast\in\af$ we have that~$\spc(\aast,\delta) = \mathcal{O}(
\varphi^{2}(f^{2}(\aast)))$ as $\delta\to 0$.
\end{thm}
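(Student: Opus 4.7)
The theorem asserts two things: unique solvability of the balance equation~\eqref{eq:balance}, and the asymptotic bound on $\spc$ at the balancing parameter. My plan is to treat these in turn, both of which are essentially bookkeeping on top of Propositions~\ref{pro:bias-main} and~\ref{pro:vadlessspread}, together with Lemma~\ref{lem:S-properties}.

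For the first part, I would introduce the auxiliary function
\[
h(\alpha) := \varphi^{2}(f^{2}(\alpha)) - \delta^{2}\Sp(\alpha),\quad \alpha>0.
\]
Since $\varphi$ and $f$ are index functions (continuous, non-decreasing, with value $0$ at the origin), the map $\alpha\mapsto \varphi^{2}(f^{2}(\alpha))$ is continuous, non-decreasing, and tends to $0$ as $\alpha\searrow 0$. By Lemma~\ref{lem:S-properties}, the map $\alpha\mapsto \delta^{2}\Sp(\alpha)$ is continuous, strictly decreasing, and surjective onto $(0,\infty)$. Hence $h$ is continuous and \emph{strictly} increasing, with $\lim_{\alpha\to 0^{+}}h(\alpha)=-\infty$ and $\lim_{\alpha\to\infty}h(\alpha)>0$. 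The intermediate value theorem combined with strict monotonicity produces the unique root~$\aast$ of \eqref{eq:balance}.

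For the second part, I would apply Proposition~\ref{pro:bias-main}: under either hypothesis~(\ref{it:low-smoothness}) or~(\ref{it:high-smoothness-precon}) there is a constant $C>0$ such that $b_{\xast}(\alpha)\leq C\,\varphi(f^{2}(\alpha))$ for every $\alpha>0$. Combining with the upper estimate from Proposition~\ref{pro:vadlessspread} at $\alpha=\aast$, I obtain
\[
\spc(\aast,\delta)\leq b^{2}_{\xast}(\aast) + \bigl(1+(1+\gamma_{\ast})^{2}\bigr)\delta^{2}\Sp(\aast).
\]
Now the definition of $\aast$ equates $\varphi^{2}(f^{2}(\aast))$ with $\delta^{2}\Sp(\aast)$, so both summands are bounded by constant multiples of $\varphi^{2}(f^{2}(\aast))$, yielding the desired $\bigo\bigl(\varphi^{2}(f^{2}(\aast))\bigr)$ bound with an explicit constant $C^{2}+1+(1+\gamma_{\ast})^{2}$ that is independent of $\delta$.

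Finally, I would note briefly that the asymptotic statement "$\delta\to 0$" is vacuously consistent: the balance equation forces $\aast\to 0$ (otherwise $\delta^{2}\Sp(\aast)\to 0$ while $\varphi^{2}(f^{2}(\aast))$ stays bounded away from zero, contradicting \eqref{eq:balance}), and therefore $\varphi^{2}(f^{2}(\aast))\to 0$ as well, confirming that the right-hand side of the bound actually decays. The \emph{only} point requiring care is the uniqueness of $\aast$ despite $\varphi\circ f^{2}$ being merely non-decreasing rather than strictly increasing; this is salvaged because $\Sp$ is strictly decreasing, so $h$ is strictly increasing regardless. Beyond this minor subtlety, the proof is just a substitution argument, so I expect no substantial obstacle.
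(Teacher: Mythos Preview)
Your proposal is correct and matches the paper's own argument, which is given inline in the paragraph preceding the theorem rather than in a separate proof: the paper simply notes that the bias bound is non-decreasing and vanishes at the origin while $\Sp$ is strictly decreasing, continuous, and onto $(0,\infty)$, so the balancing $\aast$ exists uniquely, and then Proposition~\ref{pro:vadlessspread} combined with the bias bound from Proposition~\ref{pro:bias-main} gives the $\mathcal{O}(\varphi^{2}(f^{2}(\aast)))$ estimate. Your write-up is more explicit (the auxiliary function $h$, the IVT invocation, the remark that $\aast\to 0$), but the route is identical.
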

{The importance of this theorem will become apparent in the next section. In many specific
cases, the
obtained contraction rates of the SPC correspond to known minimax
rates in statistical inverse problems. This can be seen in Propositions ~\ref{prop:exp-exp}, \ref{prop:severe}, \ref{prop:modmild}
and~\ref{prop:expexp} below. For general link conditions and general source conditions, minimax
rates and in particular lower bounds are scarce. Here we mention the
study~\cite{MR2240642}, where the linking function~$\psi$ is of power
type, and the smoothness function~$\varphi$ is
assumed to be concave.
\begin{rem}
  \label{rem:maxrate-SPC}
As emphasized in Remark~\ref{rem:maxbound-bias}, if no preconditioning is used, the best rate at which
the bias can decay is linear. {This effect, which is called saturation
(of Tikhonov regularization), was discussed in a more general context in regularization theory, and
we mention the study~\cite{MR2112789}. }

So, if no preconditioning is present, then the left
hand side in~(\ref{eq:balance}) at best decays as $\alpha^{2}$.
We conclude that the best rate of decay of the SPC which can be established
without preconditioning is $\aast^{2}$, where $\aast$ is obtained from
balancing $\alpha^{2} = \delta^{2} \Sp(\alpha)$. {Balancing actually
gives (up to some constant) the minimum value, as it was shown in
Lemma~2.4 in the same reference.}
\end{rem}

\section{Examples and discussion}\label{sec:examples}
We now study {several examples, some},  which are standard in the
literature, and some which exhibit {new features.} {Our aim is to}
demonstrate the simplicity of our method for deriving rates of
posterior contraction and most importantly  the benefits of
preconditioning the prior.  

{Before we proceed we stress the following fact, which is not so accurately
spelled out in other studies. It is important to distinguish the \emph{degree of ill-posedness of the operator}~$T$ which governs
equation~(\ref{eq:base-white}), and which expresses the decay of
its singular numbers, from the \emph{degree of ill-posedness of the problem},
which corresponds to the operator~$T$ \emph{and} the solution smoothness,
and thus regards the achievable contraction rate. As we will see
in~\S~\ref{sec:severe-analytic} below, the problem can have a signifficantly different degree of ill-posedness
than the operator $T$.}

{We first consider two examples which concern Sobolev-like smoothness of the
truth. We recover the moderately and severely ill-posed problems,  as for example studied in~\cite{MR2906881}, and ~\cite{MR3031282,
  MR3215928}, respectively. {Then, we consider another two examples which concern analytic-type smoothness of the truth, which to our knowledge have not been studied before. First, we once more study the moderately ill-posed operator problem, which we will see that under analytic-type smoothness of the truth leads to what we call a \emph{mildly ill-posed} problem. Then, we study a problem with severely ill-posed operator, which as we will see, under analytic-type smoothness of the truth leads to a \emph{moderately ill-posed} problem}.

In all of the examples, the operators $C_0$ and $T^\ast T$ are
simultaneously diagonalizable in an orthonormal basis $\{e_j\}$ which is complete in $X$,
$C_0$ has spectrum that decays as $\{j^{-1-2\prr}\}, \prr>0$, while
$T^\ast T$ can either have spectrum that decays polynomially
(moderately ill-posed operator case) or exponentially (severely ill-posed operator case).
\begin{notation}\label{def:bigo}
  Given two positive functions $k, h:\mathbb R^+\to\mathbb R^+$, we use
   $k\asymp h$ to denote that $k= \bigo(h)$ and $h=\bigo(k)$ as $s\to
  0$. Furthermore, the notation $h(s)\gg k(s)$, means that
  $k(s)=\bigo(h(s) s^{\mu})$ as $s\to0$ for some positive power
  $\mu>0$.
\end{notation}
\subsection{Smoothness relative to the prior}
\label{sec:smooth-examples}

In the first two examples, we present posterior contraction rates under
the assumption that we have the a priori knowledge that the truth
belongs to the Sobolev ellipsoid 
\begin{equation}
  \label{eq:sbeta}
  S^\psmooth=\{x\in X:
\sum_{j=1}^\infty j^{2\psmooth}x_j^2\leq1\},
\end{equation}
for some $\psmooth>0$ and where  $x_j:=\langle x,
e_j\rangle$. Relative to~$\prc$, the index function defining the source set
$\af$ in Definition \ref{de:sset}, is in this case
$\varphi(t)=t^{\frac{\psmooth}{1+2\prr}}$.

In the third example, we present posterior contraction rates under
analytic smoothness of the truth, that is, we assume that we have the
a priori knowledge that the truth belongs to the ellipsoid 
\begin{equation}
  \label{eq:abeta}
\mathcal{A}^\psmooth=\{x\in X: \sum_{j=1}^\infty e^{2\psmooth j}x_j^2\leq1\}, 
\end{equation}
for some $\psmooth>0$. In this case, the index function defining the
source set $\af$ in Definition \ref{de:sset}, is
$\varphi(t)=\exp(-\psmooth t^{-\frac{1}{1+2\prr}})$.

\subsection{Moderately ill-posed {operator} under Sobolev smoothness}
\label{sec:moderate-examples}
We consider the moderately ill-posed setup studied in~\cite{MR2906881}, in which the operator $T^\ast T$ has spectrum which
decays as $\{j^{-2p}\}$ for some $p\geq0$, and thus the singular numbers of~$\bastb$
decay as $s_{j}(\bastb)\asymp j^{-(1 + 2\prr +
  2p)}$.

In the present case Assumption~ \ref{ass:link}, which expresses the operator $T^\ast T$ as a function
of the prior covariance operator $C_0$, is satisfied for
$\psi^2(t)=t^{\frac{2p}{1+2\prr}}$. Next, we find that the function
$\Theta_{\psi}$ in (\ref{eq:theta-def}), which expresses the operator
$B^\ast B$ as a function of $C_0$, is given as
$\Theta_{\psi}(t)=t^\frac{1+2\prr+2p}{2(1+2\prr)}$, {hence the benchmark smoothness is $\Theta_{\psi}^{2}(t) = t^{\frac{1 +
  2\prr + 2p}{1+2\prr}}$}. Finally, we
have that the function $f$ in (\ref{eq:f-interpolation}), which
expresses $C_0$ as a function of $B^\ast B$ is given by~{$f(s)=s^{\frac{1+2\prr}{2(1+2\prr+2p)}}$}.

\subsubsection*{Bounding the bias}
We now have all the ingredients required to bound the bias. 
{The
following result is an immediate consequence of Proposition
\ref{pro:bias-main} and the considerations of the previous paragraph}.

\begin{prop}\label{prop:moderate-moderate}
  Suppose that $x^\ast\in S^\psmooth,$ for some $\psmooth>0$. Then as $\alpha\to0$:
  \begin{enumerate}
  \item\label{it:low-gamma} If $\psmooth\leq1+2\prr+2p$, and independently of whether
    preconditioning of the prior is used or not, we have that
    $b_{\xast}(\alpha)=\mathcal{O}(\alpha^{\frac{\psmooth}{1+2\prr+2p}})$;
  \item\label{it:high-gamma-noprecon}if $\psmooth>1+2\prr+2p$ and no preconditioning of the
    prior is used, then $b_{\xast}(\alpha)\asymp \alpha$;
  \item\label{it:high-gamma-precon} if $\psmooth>1+2\prr+2p$ and $\mad$ uses a regularization
    $g_\alpha$ with qualification
    $t^{\frac{\psmooth-1-2\prr-2p}{1+2\prr+2p}}$, then
    $b_{\xast}(\alpha)=
   \mathcal{O}(\alpha^{\frac{\psmooth}{1+2\prr+2p}})$.
  \end{enumerate}
\end{prop}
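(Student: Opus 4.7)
The plan is to verify the hypotheses of Proposition~\ref{pro:bias-main} in this concrete setting and convert its abstract bounds into the explicit powers of $\alpha$ that appear in the three items. From the identifications made just before the proposition, I have $\varphi(t) = t^{\beta/(1+2\prr)}$, $\Theta_{\psi}^{2}(t) = t^{(1+2\prr+2p)/(1+2\prr)}$, and $f(s) = s^{(1+2\prr)/(2(1+2\prr+2p))}$. The quotient $\Theta_{\psi}^{2}(t)/\varphi(t) = t^{(1+2\prr+2p-\beta)/(1+2\prr)}$ is non-decreasing precisely when $\beta \leq 1+2\prr+2p$, so in the notation of Definition~\ref{de:prec} one has $\varphi \prec \Theta_{\psi}^{2}$ exactly in the low-smoothness regime of item~(\ref{it:low-gamma}), and $\Theta_{\psi}^{2} \prec \varphi$ exactly in the high-smoothness regime of items~(\ref{it:high-gamma-noprecon}) and~(\ref{it:high-gamma-precon}). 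A direct composition yields
\[
\varphi(f^{2}(\alpha)) = \alpha^{\beta/(1+2\prr+2p)}.
\]

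Given these identifications, each item follows from the corresponding case of Proposition~\ref{pro:bias-main}. For item~(\ref{it:low-gamma}), case~(\ref{it:low-smoothness}) of Proposition~\ref{pro:bias-main} gives $b_{\xast}(\alpha) \leq \gamma_{0}\,\alpha^{\beta/(1+2\prr+2p)}$; this is insensitive to whether preconditioning is used, since in the un-preconditioned case the convention of Remark~\ref{rem:comparison} sets $\ra \equiv 1$ (with $\gamma_{0} = 1$) and the argument of case~(\ref{it:low-smoothness}) applies verbatim. Item~(\ref{it:high-gamma-noprecon}) is the immediate specialization of case~(\ref{it:high-smoothness-noprecon}). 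For item~(\ref{it:high-gamma-precon}), I would check the qualification hypothesis of case~(\ref{it:high-smoothness-precon}) by computing
\[
\frac{\varphi(f^{2}(t))}{t} = t^{(\beta - 1 - 2\prr - 2p)/(1+2\prr+2p)},
\]
which is precisely the power qualification assumed for $\ga$; hence case~(\ref{it:high-smoothness-precon}) supplies the bound $b_{\xast}(\alpha) \leq \gamma\,\alpha^{\beta/(1+2\prr+2p)}$.

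The proof is essentially a bookkeeping exercise and no substantial obstacle is expected. The only point deserving care is the boundary $\beta = 1 + 2\prr + 2p$, at which $\varphi$ and $\Theta_{\psi}^{2}$ coincide and more than one case of Proposition~\ref{pro:bias-main} is formally applicable; however all three corresponding rates reduce there to $\mathcal{O}(\alpha)$, so the statement is internally consistent, in line with Remark~\ref{rem:knapik-bias}.
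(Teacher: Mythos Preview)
Your proposal is correct and follows exactly the paper's approach: the paper states that the result ``is an immediate consequence of Proposition~\ref{pro:bias-main} and the considerations of the previous paragraph,'' and you have carried out precisely that specialization, computing the relevant index functions and matching each item to the corresponding case of Proposition~\ref{pro:bias-main}.
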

We stress here that our contribution is
item~(\ref{it:high-gamma-precon}). In particular,
item~(\ref{it:high-gamma-precon}) 
implies that if we choose the prior mean $\mad$ using the $k$-fold
Tikhonov regularization filter ({cf. Example~\ref{xmpl:k-Tikhonov}}), which has maximal qualification $t^k$, then for
$\psmooth\leq (k+1)(1+2\prr+2p)$ we have that $b_{\xast}(\alpha)=
\mathcal{O}(\alpha^{\frac{\psmooth}{1+2\prr+2p}})$, that is, the saturation in the bias
is delayed. If we choose $\mad$ using the spectral cut-off
regularization filter, which as we saw in Example \ref{xmpl:cutoff-quali} has arbitrary qualification, then for any $\psmooth>0,$ we have that
$b_{\xast}(\alpha)=\mathcal{O}(\alpha^{\frac{\psmooth}{1+2\prr+2p}})$, that is,
there is no saturation in the bias.

\subsubsection*{Bounding the SPC}

To see the impact of this result to the SPC rate, we apply Theorem
\ref{thm:main}. In order to do so, we first need to calculate the net posterior spread which in this case is {such that
$\Sp(\alpha)\asymp\alpha^{-\frac{1+2p}{1+2\prr+2p}}$}, see \cite[Thm 4.1]{MR2906881}. Concatenating we get
the following result.

\begin{prop}\label{prop:exp-exp}
  Suppose that $x^\ast\in S^\psmooth, $ $\psmooth>0$. Then as $\delta\to0$:
  \begin{enumerate}
  \item\label{it:spc-low-gamma} if $\psmooth\leq1+2\prr+2p$ and independently of whether
    preconditioning of the prior is used or not, for
    $\alpha=\delta^{\frac{2(1+2\prr+2p)}{1+2p+2\psmooth}}$ we have
    that $SPC=\mathcal O(\delta^{\frac{4\psmooth}{1+2\psmooth+2p}})$;
  \item\label{it:spc-high-gamma-noprecon} if $\psmooth>1+2\prr+2p$ and no preconditioning of the
    prior is used, then for any choice $\alpha=\alpha(\delta, \psmooth)$
    we have that $SPC\gg \delta^\frac{4\psmooth}{1+2\psmooth+2p}$;
  \item\label{it:spc-high-gamma-precon} if $\psmooth>1+2\prr+2p$ and $\mad$ uses a regularization
    $g_\alpha$ with qualification
    $t^{\frac{\psmooth-1-2\prr-2p}{1+2\prr+2p}}$, for
    $\alpha=\delta^{\frac{2(1+2\prr+2p)}{1+2p+2\psmooth}}$ we have
    that $SPC=\mathcal O(\delta^{\frac{4\psmooth}{1+2\psmooth+2p}})$.
      \end{enumerate}
\end{prop}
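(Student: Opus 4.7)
The plan is to apply Theorem~\ref{thm:main} for cases (\ref{it:spc-low-gamma}) and (\ref{it:spc-high-gamma-precon}), while case (\ref{it:spc-high-gamma-noprecon}) requires instead a matching two-sided argument using the lower bound for the bias saturation in Proposition~\ref{pro:bias-main}(\ref{it:high-smoothness-noprecon}). In both regimes the essential input is the net posterior spread estimate $\Sp(\alpha)\asymp\alpha^{-(1+2p)/(1+2a+2p)}$, which is already cited from~\cite[Thm 4.1]{MR2906881}.

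For cases (\ref{it:spc-low-gamma}) and (\ref{it:spc-high-gamma-precon}), Proposition~\ref{prop:moderate-moderate} gives the bias bound $b^{2}_{\xast}(\alpha)=\mathcal O(\alpha^{2\psmooth/(1+2\prr+2p)})$. The balance equation~(\ref{eq:balance}) from Theorem~\ref{thm:main} then reads
\begin{equation*}
  \alpha^{2\psmooth/(1+2\prr+2p)} \asymp \delta^{2}\,\alpha^{-(1+2p)/(1+2\prr+2p)},
\end{equation*}
which I would solve in closed form to obtain the unique balancing parameter $\aast\asymp\delta^{2(1+2\prr+2p)/(1+2p+2\psmooth)}$, matching the proposed choice of $\alpha$ in the statement. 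Substituting this back into the bias bound and invoking Theorem~\ref{thm:main} directly yields $\spc(\aast,\delta)=\mathcal O(\delta^{4\psmooth/(1+2\psmooth+2p)})$, which is the asserted rate. No additional work is required beyond verifying the algebra of exponents.

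For case (\ref{it:spc-high-gamma-noprecon}), the key is the lower bound $\spc(\alpha,\delta)\geq b^{2}_{\xast}(\alpha)+\delta^{2}\Sp(\alpha)$ supplied by Proposition~\ref{pro:vadlessspread}, combined with the two-sided saturation estimate $b_{\xast}(\alpha)\asymp\alpha$ from Proposition~\ref{pro:bias-main}(\ref{it:high-smoothness-noprecon}), valid since $\psmooth>1+2\prr+2p$ and no preconditioning is applied. So uniformly in any parameter choice $\alpha=\alpha(\delta)$,
\begin{equation*}
  \spc(\alpha,\delta)\;\geq\; c_{1}^{2}\alpha^{2}+c\,\delta^{2}\alpha^{-(1+2p)/(1+2\prr+2p)}.
\end{equation*}
Minimising the right-hand side by elementary calculus gives minimum value of order $\delta^{4(1+2\prr+2p)/(3+4\prr+6p)}$. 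Setting $\gamma:=1+2\prr+2p$ and using $3+4\prr+6p=1+2\gamma+2p$, the final step is the comparison of exponents $\tfrac{4\gamma}{1+2\gamma+2p}$ versus $\tfrac{4\psmooth}{1+2\psmooth+2p}$. The function $x\mapsto 4x/(1+2x+2p)$ is strictly increasing on $(0,\infty)$, so $\psmooth>\gamma$ yields a strictly positive gap between these exponents. This gap plays the role of $\mu>0$ in Notation~\ref{def:bigo}, giving $\spc(\alpha,\delta)\gg\delta^{4\psmooth/(1+2\psmooth+2p)}$ uniformly over all admissible $\alpha$. The only subtle point is ensuring that the $\alpha$-independent lower constants $c_{1},c$ are available before minimising; this is exactly what the two-sided bound in Proposition~\ref{pro:bias-main}(\ref{it:high-smoothness-noprecon}) and the $\asymp$-bound for $\Sp$ provide.
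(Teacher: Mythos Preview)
Your proposal is correct and follows essentially the same approach as the paper. For items~(\ref{it:spc-low-gamma}) and~(\ref{it:spc-high-gamma-precon}) the paper simply says ``concatenating'' Theorem~\ref{thm:main} with the bias bound from Proposition~\ref{prop:moderate-moderate} and the cited spread estimate, which is exactly what you do; for item~(\ref{it:spc-high-gamma-noprecon}) the paper's argument (spelled out only in the analogous Propositions~\ref{prop:modmild} and~\ref{prop:expexp}) invokes Remark~\ref{rem:maxrate-SPC}, which amounts precisely to your minimisation of $\alpha^{2}+\delta^{2}\Sp(\alpha)$, arriving at the same saturated exponent $4(1+2\prr+2p)/(3+4\prr+6p)$ and the same comparison via monotonicity of $x\mapsto 4x/(1+2x+2p)$.
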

As before, our contribution is item~(\ref{it:spc-high-gamma-precon}),
which in particular implies that 
if we choose the prior mean $\mad$ using the $k$-fold
Tikhonov regularization filter, then for $\psmooth\leq
(k+1)(1+2\prr+2p)$ we achieve the optimal (minimax) rate
$\delta^{\frac{4\psmooth}{1+2\psmooth+2p}}$, that is the saturation in the SPC
is also delayed. If we choose $\mad$ using the spectral cut-off
regularization filter, then for any $\psmooth\geq0$ we achieve the
optimal rate $\delta^{\frac{4\psmooth}{1+2\psmooth+2p}}$, that is, there is
no saturation in the SPC! Note that the optimal scaling of the prior, as a function of the noise level $\delta$,
is the same whether we use preconditioning or not.
We depict the findings in Figure~\ref{fig1}.

\subsection{Severely ill-posed {operator} under Sobolev smoothness}
\label{sec:severe-examples}
We now consider the severely ill-posed setup studied in
\cite{MR3215928, MR3031282}, in which the operator $T^\ast T$ has spectrum which
decays as $\{e^{-2qj^b}\}$ for some $q,b>0$, and thus the singular numbers of~$\bastb$
decay as $s_{j}(\bastb)\asymp j^{-(1+
  2\prr)}e^{-2qj^b}$.

In this case Assumption~ \ref{ass:link}, which expresses the operator $T^\ast T$ as a function
of the prior covariance operator $C_0$, is satisfied for
$\psi^2(t)=\exp(-2qt^{-\frac{b}{1+2\prr}})$. Next, we find that the function
$\Theta_{\psi}$ in (\ref{eq:theta-def}), which expresses the operator
$B^\ast B$ as a function of $C_0$, is given as
$\Theta_{\psi}(t)=t^\frac12\exp(-qt^{-\frac{b}{1+2\prr}})$, and hence the benchmark smoothness is $\Theta^2_{\psi}(t)=t\exp(-2qt^{-\frac{b}{1+2\prr}})$. Finally, we
have that as $t\to0$, the function $f$ in (\ref{eq:f-interpolation}) which
expresses $C_0$ as a function of $B^\ast B$ behaves as
$f(s)\sim (\log(s^{-\frac{1}{2q}}))^{-\frac{1+2\prr}{2b}}$, see Lemma \ref{lem:sevinv} in Section \ref{sec:proofs}.

\subsubsection*{Bounding the bias}

In this example we have that $\Theta_{\Psi}^2(t)$ decays exponentially, while $\varphi(t)$ polynomially, hence for any Sobolev-like smoothness of the truth $\psmooth$, it holds $\varphi \prec \Theta_{\psi}^{2}$. In other words, even without preconditioning there is no saturation in the bias and we are always in case (\ref{it:low-smoothness}) in Proposition
\ref{pro:bias-main}. However, our theory still works and we can easily derive the rate for the bias and SPC. The next result follows immediately from the considerations in the previous paragraph and Proposition \ref{pro:bias-main}.
\begin{prop}
  Suppose that $x^\ast\in S^\psmooth,$ $\psmooth>0$.
  Then independently of whether
    preconditioning of the prior is used or not, we have that
    $b_{\xast}(\alpha)=\mathcal{O}\big((\log(\alpha^{-1}))^{-\frac{\psmooth}b}\big)$, as $\alpha\to 0$.
\end{prop}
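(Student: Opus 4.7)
The plan is to recognize that we are in case~(\ref{it:low-smoothness}) of Proposition~\ref{pro:bias-main} and then to translate the resulting bound $\varphi(f^{2}(\alpha))$ into a logarithmic rate using the asymptotics of $f$ recorded in Lemma~\ref{lem:sevinv}.

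First I would check the partial ordering $\varphi \prec \Theta_{\psi}^{2}$. In this example $\varphi(t)=t^{\psmooth/(1+2\prr)}$ decays only polynomially, while from the computation preceding the proposition the benchmark smoothness is $\Theta_{\psi}^{2}(t) = t\exp(-2qt^{-b/(1+2\prr)})$, which decays faster than every polynomial as $t\searrow 0$. The quotient $\Theta_{\psi}^{2}(t)/\varphi(t) = t^{1-\psmooth/(1+2\prr)} \exp(-2qt^{-b/(1+2\prr)})$ is therefore non-decreasing on a right neighbourhood of the origin (the exponential factor is increasing and dominates any polynomial), so $\varphi \prec \Theta_{\psi}^{2}$ in the sense of Notation~\ref{de:prec}.

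Next I would invoke item~(\ref{it:low-smoothness}) of Proposition~\ref{pro:bias-main} to obtain $b_{\xast}(\alpha) \leq \gamma_{0}\,\varphi(f^{2}(\alpha))$. This single bound covers both scenarios of the proposition uniformly: without preconditioning the convention of Remark~\ref{rem:comparison} gives $r_{\alpha}\equiv 1$ with $\gamma_{0}=1$, while for any regularization filter the constant $\gamma_{0}$ is finite by Definition~\ref{def:linreg}(\ref{it:r-bound}); only the multiplicative constant depends on the choice of filter.

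To finish, I would substitute the asymptotic $f(s)\sim \bigl(\tfrac{1}{2q}\log(s^{-1})\bigr)^{-(1+2\prr)/(2b)}$ from Lemma~\ref{lem:sevinv}. Squaring and raising to the power $\psmooth/(1+2\prr)$ then gives $\varphi(f^{2}(\alpha)) = \bigo\bigl((\log(\alpha^{-1}))^{-\psmooth/b}\bigr)$, which is the claim. The only step with any substance is the asymptotic inversion of $\Theta_{\psi}^{2}$ recorded in Lemma~\ref{lem:sevinv}; once that is in hand there is no saturation obstruction to worry about, precisely because the polynomial smoothness $\varphi$ sits comfortably below the exponentially decaying benchmark $\Theta_{\psi}^{2}$.
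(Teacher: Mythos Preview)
Your proposal is correct and follows essentially the same route as the paper, which simply states that the result ``follows immediately from the considerations in the previous paragraph and Proposition~\ref{pro:bias-main}''; those considerations are precisely the verification that $\varphi\prec\Theta_{\psi}^{2}$ and the asymptotics of $f$ from Lemma~\ref{lem:sevinv}. Your additional care in spelling out why the quotient $\Theta_{\psi}^{2}/\varphi$ is non-decreasing near the origin and in noting that the bound $\gamma_{0}\varphi(f^{2}(\alpha))$ covers both the preconditioned and non-preconditioned cases uniformly is a welcome elaboration of what the paper leaves implicit.
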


\subsubsection*{Bounding the SPC}
We now apply Theorem
\ref{thm:main} in order to calculate the SPC rate. Again, we first need to calculate the net posterior spread, which in this case is  such that {
$\Sp(\alpha)\asymp\frac1\alpha(\log(\alpha^{-1}))^{-\frac{2a}b}$}, see \cite[Thm 4.2]{MR3215928}. We prove the following result, which agrees with \cite[Thm 2.1]{MR3031282} and \cite[Thm 4.3]{MR3215928}.

\begin{prop}\label{prop:severe}
  Suppose that $x^\ast\in S^\psmooth,$ $\psmooth>0$.
 Then independently of whether
    preconditioning of the prior is used or not, for any $\sigma>0$, any parameter choice rule $\alpha=\alpha(\delta)$ such that 
    $\delta^2(\log(\delta^{-2}))^{\frac{2\psmooth-2\prr}b}\leq\alpha\leq\delta^{2\sigma},$ gives the rate $SPC=\mathcal O((\log(\delta^{-2}))^{-\frac{2\psmooth}{b}}))$, as
    $\delta\to 0$.
\end{prop}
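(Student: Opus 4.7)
The plan is to bound the squared posterior contraction directly, term by term, using the estimate $\spc(\alpha,\delta) \leq b_{\xast}^{2}(\alpha) + (1 + (1 + \gamma_{\ast})^{2})\,\delta^{2}\Sp(\alpha)$ provided by Proposition~\ref{pro:vadlessspread}, rather than relying on Theorem~\ref{thm:main}: the proposition asserts the rate for an entire \emph{range} of parameter choices, so pointwise (and not just at the balanced value) bounds on bias and spread are needed. The inputs are the previously established bias bound
$b_{\xast}(\alpha) = \bigo\!\left((\log(\alpha^{-1}))^{-\psmooth/b}\right)$
(valid with or without preconditioning, since $\varphi \prec \Theta_{\psi}^{2}$ here), together with the asymptotics $\Sp(\alpha) \asymp \alpha^{-1}(\log(\alpha^{-1}))^{-2\prr/b}$ recorded before the statement.

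Next I would track how $\log(\alpha^{-1})$ behaves uniformly across the admissible range $\delta^{2}(\log(\delta^{-2}))^{(2\psmooth-2\prr)/b} \leq \alpha \leq \delta^{2\sigma}$. The upper bound on $\alpha$ gives $\log(\alpha^{-1}) \geq \sigma \log(\delta^{-2})$, while taking $-\log$ of the lower bound yields
\begin{equation*}
\log(\alpha^{-1}) \leq \log(\delta^{-2}) - \frac{2\psmooth - 2\prr}{b}\,\log\!\log(\delta^{-2}),
\end{equation*}
and hence $\log(\alpha^{-1}) \asymp \log(\delta^{-2})$ as $\delta\to 0$. Plugging this into the bias bound immediately gives $b_{\xast}^{2}(\alpha) = \bigo\!\left((\log(\delta^{-2}))^{-2\psmooth/b}\right)$.

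For the spread term, I would compute
\begin{equation*}
\delta^{2}\Sp(\alpha) \asymp \frac{\delta^{2}}{\alpha}(\log(\alpha^{-1}))^{-2\prr/b},
\end{equation*}
and then use the lower bound on $\alpha$ to control $\delta^{2}/\alpha \leq (\log(\delta^{-2}))^{-(2\psmooth - 2\prr)/b}$, together with the equivalence $(\log(\alpha^{-1}))^{-2\prr/b} \asymp (\log(\delta^{-2}))^{-2\prr/b}$ from the previous step. Multiplying, the two $\log$-exponents $-(2\psmooth - 2\prr)/b$ and $-2\prr/b$ add to $-2\psmooth/b$, yielding $\delta^{2}\Sp(\alpha) = \bigo\!\left((\log(\delta^{-2}))^{-2\psmooth/b}\right)$. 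Summing bias squared and spread terms and invoking Proposition~\ref{pro:vadlessspread} gives the claimed $\spc$ rate.

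The only mildly delicate point is the two-sided control of $\log(\alpha^{-1})$: the lower bound on $\alpha$ involves a $\log\log$ correction to $\delta^{2}$, so one must verify that this correction is negligible (which it is, being $o(\log(\delta^{-2}))$) before replacing $\log(\alpha^{-1})$ by $\log(\delta^{-2})$ up to a multiplicative constant. Once this is in hand, the remainder is just the arithmetic of exponents shown above, which cleanly balances to $-2\psmooth/b$ exactly because the lower bound on $\alpha$ was designed so that the $\delta^{2}/\alpha$ factor absorbs precisely the missing $(\log(\delta^{-2}))^{(2\psmooth-2\prr)/b}$ needed to match the bias.
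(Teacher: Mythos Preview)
Your proposal is correct and follows essentially the same route as the paper: bound the squared bias via the upper constraint $\alpha\leq\delta^{2\sigma}$ (which gives $\log(\alpha^{-1})\geq\sigma\log(\delta^{-2})$), and bound the spread term via the lower constraint on $\alpha$ together with the same logarithmic comparison. Your two-sided control of $\log(\alpha^{-1})$ is slightly more than needed---since both exponents $-2\psmooth/b$ and $-2\prr/b$ are negative, only the lower bound on $\log(\alpha^{-1})$ is actually used---but it is harmless and makes the argument more transparent than the paper's terse version.
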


\subsection{Moderately ill-posed operator under analytic smoothness}
\label{sec:moderate-analytic}
We now consider the moderately ill-posed operator setup studied in \S~\ref{sec:moderate-examples}
with the difference that {here} we assume that we have the a priori knowledge that the truth has a certain analytic smoothness. 
The functions $\psi, \Theta_{\psi}$ and $f$ which have to do with the
relationship between the forward operator and the prior covariance are
as in \S~\ref{sec:moderate-examples}, but the
function $\varphi$ which {describes analytic smoothness of the truth
  as in (\ref{eq:abeta}}), is now $\varphi(t)=\exp(-\psmooth
t^{-\frac{1}{1+2a}})$. In particular, since $\varphi$ is exponential
while the benchmark smoothness $\Theta^2_{\psi}$ is {of power type, we are always in the high smoothness case $\Theta_{\Psi}^2\prec\varphi$.}

\subsubsection*{Bounding the bias} The following is an immediate consequence of
Proposition \ref{pro:bias-main} and the considerations in the previous paragraph.
\begin{prop}
Suppose that $x^\ast\in\mathcal{A}^\psmooth$, for some $\psmooth>0$. Then as $\alpha\to 0$:
\begin{enumerate}
\item\label{it:expexp-saturation} if no preconditioning is used,  $b_{\xast}(\alpha)\asymp \alpha$;
\item\label{it:expexp-high} if $\mad$ uses a regularization
    $g_\alpha$ with qualification
    $\exp(-\psmooth t^{-1})$, then we have that $b_{\xast}(\alpha)=\mathcal{O}(\exp(-\psmooth\alpha^{-\frac{1}{1+2\prr+2p}}))$.
\end{enumerate}
\end{prop}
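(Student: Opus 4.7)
The plan is to derive both items directly from Proposition~\ref{pro:bias-main}, after first observing that with $\varphi(t)=\exp(-\psmooth t^{-1/(1+2\prr)})$ decaying super-polynomially and the benchmark $\Theta_{\psi}^{2}(t) = t^{(1+2\prr+2p)/(1+2\prr)}$ being of power type, one has $\Theta_{\psi}^{2}\prec\varphi$ in the sense of Notation~\ref{de:prec}. Hence we are always in the high smoothness regime and items~(\ref{it:high-smoothness-noprecon}) or~(\ref{it:high-smoothness-precon}) of Proposition~\ref{pro:bias-main} apply.

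For~(\ref{it:expexp-saturation}), item~(\ref{it:high-smoothness-noprecon}) of Proposition~\ref{pro:bias-main} gives immediately $c_1\alpha\le b_{\xast}(\alpha)\le c_2\alpha$ as $\alpha\to 0$, that is, the linear saturation. No further calculation is required.

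For~(\ref{it:expexp-high}), I first compute the composite smoothness along the filter $f$ explicitly: since $f^{2}(\alpha)=\alpha^{(1+2\prr)/(1+2\prr+2p)}$,
\[
\varphi\bigl(f^{2}(\alpha)\bigr)=\exp\Bigl(-\psmooth\,\alpha^{-\tfrac{1}{1+2\prr+2p}}\Bigr).
\]
To invoke item~(\ref{it:high-smoothness-precon}) of Proposition~\ref{pro:bias-main} I then verify that the function $h(t):=\varphi(f^{2}(t))/t=t^{-1}\exp(-\psmooth t^{-1/(1+2\prr+2p)})$ is a qualification of $g_\alpha$. By Lemma~\ref{lem:phipsi} it suffices to show $h\prec \exp(-\psmooth t^{-1})$, that is, the ratio
\[
\frac{\exp(-\psmooth t^{-1})}{h(t)}=t\,\exp\!\Bigl(\psmooth\bigl(t^{-1/(1+2\prr+2p)}-t^{-1}\bigr)\Bigr)
\]
is non-decreasing near zero. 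Taking logarithmic derivative and multiplying by $t^{2}$ shows that the sign is governed by $t+\psmooth\bigl(1-\tfrac{1}{1+2\prr+2p}t^{1-1/(1+2\prr+2p)}\bigr)$, which tends to $\psmooth>0$ as $t\to 0$ because the exponent $1-1/(1+2\prr+2p)$ is strictly positive. Hence $h\prec\exp(-\psmooth t^{-1})$ on a neighbourhood of zero; extending asymptotically as discussed in Remark after Definition of index functions, $h$ is a qualification of $g_\alpha$, and item~(\ref{it:high-smoothness-precon}) of Proposition~\ref{pro:bias-main} yields the claimed bound.

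The only non-routine step is the monotonicity check for the ratio above; it is straightforward because the gap between the exponents $-1$ and $-1/(1+2\prr+2p)$ produces an overwhelmingly dominant contribution to the derivative, but it must be spelled out in order to legitimately transfer the qualification via Lemma~\ref{lem:phipsi}.
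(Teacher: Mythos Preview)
Your proposal is correct and follows the same route as the paper: both items are read off directly from Proposition~\ref{pro:bias-main} once one observes that the exponential $\varphi$ lies above the power-type benchmark $\Theta_{\psi}^{2}$. The paper simply declares the result an immediate consequence and notes in the subsequent remark that the stated qualification $\exp(-\psmooth t^{-1})$ is merely a sufficient condition; you go further and actually carry out the check that $h(t)=\varphi(f^{2}(t))/t\prec\exp(-\psmooth t^{-1})$ so that Lemma~\ref{lem:phipsi} transfers the qualification, which is a welcome addition of detail rather than a different method.
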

\begin{rem}
  \label{rem:power-exp-no-precon}
If no preconditioning is used, the bias convergence rate is always saturated. 
The qualification as formulated in item~(\ref{it:expexp-saturation})
is a \emph{sufficient condition}, while the actual form can be
calculated easily. The given form highlights that exponential type
qualification is required to overcome the limitation of the power type
prior covariance in order to treat analytic smoothness.  We stress here that such qualification is hard to achieve. For example, iterated Tikhonov can never achieve such exponential qualification, while even Landweber iteration which has qualification $t^\nu$, for any $\nu>0$, only achieves this qualification for values $\psmooth$ which are not too big. On the other hand, $\exp(-\psmooth t^{-1})$ is a qualification for spectral cut-off for any positive value of $\psmooth$.
\end{rem}

\subsubsection*{Bounding the SPC}
We again apply Theorem \ref{thm:main} in order to calculate the SPC rate. The net posterior spread is as in \S~\ref{sec:moderate-examples}, 
{$\Sp(\alpha)\asymp\alpha^{-\frac{1+2p}{1+2\prr+2p}}$}. We prove the
following result, using the convention from Definition~\ref{def:bigo}.

\begin{prop}\label{prop:modmild}
Suppose that $x^\ast\in \mathcal{A}^\psmooth, $ $\psmooth>0$. Then as $\delta\to0$:
  \begin{enumerate}
  \item\label{it:spc-high-gamma-noprecon-mild} if no preconditioning of the
    prior is used, then for any choice $\alpha=\alpha(\delta, \psmooth)$
    we have that $SPC\gg \delta^2$;
  \item\label{it:spc-high-gamma-precon-mild} if $\mad$ uses a regularization
    $g_\alpha$ with qualification
    $\exp(-\psmooth t^{-1})$, for
    $\alpha=(\log(\delta^{-1/\psmooth}))^{-(1+2\prr+2p)}$ we have
    that $SPC=\mathcal O(\delta^2(\log(\delta^{-1}))^{1+2p})$.
  \end{enumerate}
\end{prop}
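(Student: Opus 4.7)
The plan is to combine the bias bounds established in the preceding proposition of \S~\ref{sec:moderate-analytic} with the net posterior spread $\Sp(\alpha)\asymp\alpha^{-(1+2p)/(1+2\prr+2p)}$ inherited from \S~\ref{sec:moderate-examples}, feeding both into the two-sided SPC decomposition
\[
b^2_{\xast}(\alpha)+\delta^2\Sp(\alpha)\;\leq\;\spc(\alpha,\delta)\;\leq\;b^2_{\xast}(\alpha)+\bigl(1+(1+\gamma_\ast)^2\bigr)\delta^2\Sp(\alpha)
\]
of Proposition~\ref{pro:vadlessspread}. With these ingredients both items reduce to elementary calculus.

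For item~(\ref{it:spc-high-gamma-noprecon-mild}), saturation of the bias with no preconditioning forces $b_{\xast}(\alpha)\asymp\alpha$, so that
\[
\spc(\alpha,\delta)\;\geq\;c_1\alpha^2+c_2\delta^2\alpha^{-(1+2p)/(1+2\prr+2p)}.
\]
Minimizing the right-hand side over $\alpha>0$ yields the balancing choice $\alpha\asymp\delta^{2(1+2\prr+2p)/(3+4\prr+6p)}$ and minimum value $\asymp\delta^{4(1+2\prr+2p)/(3+4\prr+6p)}$. The arithmetic check $4(1+2\prr+2p)<2(3+4\prr+6p)$ holds for every $p\geq 0$, so this exponent is strictly smaller than $2$. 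Since this minimum is a uniform lower bound over all $\alpha$, every parameter choice $\alpha(\delta,\psmooth)$ yields $\spc\geq c\,\delta^\sigma$ for some fixed $\sigma<2$, which is exactly the assertion $\spc\gg\delta^2$ in the sense of Notation~\ref{def:bigo}. This universal-quantifier step is the only real subtlety of the proof.

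For item~(\ref{it:spc-high-gamma-precon-mild}), I would substitute $\alpha=(\log(\delta^{-1/\psmooth}))^{-(1+2\prr+2p)}$ directly into the upper bound. By construction $\alpha^{-1/(1+2\prr+2p)}=\log(\delta^{-1/\psmooth})$, so the preconditioned bias estimate of the preceding proposition gives
\[
b^2_{\xast}(\alpha)\;\leq\;C\exp\bigl(-2\psmooth\alpha^{-1/(1+2\prr+2p)}\bigr)=C\exp\bigl(-2\log\delta^{-1}\bigr)=C\delta^2,
\]
while $\delta^2\Sp(\alpha)\asymp\delta^2\bigl(\log\delta^{-1/\psmooth}\bigr)^{1+2p}\asymp\delta^2(\log\delta^{-1})^{1+2p}$. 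Summing gives the stated rate. The choice of $\alpha$ is guided by Theorem~\ref{thm:main}: it is not the exact solution of the transcendental balance equation~(\ref{eq:balance}), but since the squared bias decays exponentially in $\alpha^{-1/(1+2\prr+2p)}$ while the spread decays only polynomially, solving only the exponential part captures the root up to polylogarithmic factors — precisely the slack that reappears as $(\log\delta^{-1})^{1+2p}$ in the final rate.
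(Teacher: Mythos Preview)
Your proof is correct and follows essentially the same approach as the paper. For item~(\ref{it:spc-high-gamma-noprecon-mild}) the paper simply invokes Remark~\ref{rem:maxrate-SPC} and records the resulting exponent $\tfrac{4+8\prr+8p}{3+4\prr+6p}$, which is exactly the minimization you spell out; for item~(\ref{it:spc-high-gamma-precon-mild}) the paper routes the verification through the balance equation~(\ref{eq:balance}) and an external lemma to identify its asymptotic root, whereas you bypass this by substituting the stated $\alpha$ directly into the two-sided SPC bound---a slightly cleaner and more self-contained variant of the same argument.
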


\begin{rem}
  \label{rem:prop4.6}
We stress that according to item (\ref{it:spc-high-gamma-noprecon-mild}), without preconditioning we have that $\delta^2/SPC$ decays at an algebraic rate, while the optimal achievable (also minimax)
  rate is of power two up to some logarithmic factor.
  Since the optimal achievable rate in this case is of power two up to logarithmic factors, it is
  reasonable to call such problems \emph{mildly ill-posed}, as
  they are almost well-posed.
  \end{rem}

\subsection{Severely ill-posed operator under analytic smoothness}
\label{sec:severe-analytic}
We now consider the severely ill-posed operator setup studied in \S~\ref{sec:severe-examples}
with the difference that {here} we assume that we have the a priori knowledge that the truth has a certain analytic smoothness. For simplicity, we concentrate on the case $b=1$, which corresponds for example to the Cauchy problem for the Helmholtz equation, see \cite[Section~5]{MR3215928} for details.

The functions $\psi, \Theta_{\psi}$ and $f$ which have to do with the
relationship between the forward operator and the prior covariance are
as in \S~\ref{sec:severe-examples} for the value $b=1$, but the
function $\varphi$ which {describes analytic smoothness of the truth as in (\ref{eq:abeta}}), is now $\varphi(t)=\exp(-\psmooth t^{-\frac{1}{1+2a}})$. In particular, since both $\varphi$ and the benchmark smoothness $\Theta^2_{\psi}$ are exponential, unlike \S~\ref{sec:severe-examples} we now have a saturation phenomenon. 

\emph{Bounding the bias.} The following is an immediate consequence of
Proposition \ref{pro:bias-main} and the considerations in the previous paragraph.
\begin{prop}
Suppose that $x^\ast\in\mathcal{A}^\psmooth$, for some $\psmooth>0$. Then as $\alpha\to 0$: 
\begin{enumerate}
\item\label{it:expexp-low} if $\psmooth\leq2q$ and independently of whether preconditioning of the prior is used or not, we have that $b_{\xast}(\alpha)=\mathcal{O}(\alpha^{\frac{\psmooth}{2q}})$;
\item\label{it:expexp-saturation} if $\psmooth>2q$ and no preconditioning is used $b_{\xast}(\alpha)\asymp\alpha$;
\item\label{it:expexp-high} if $\psmooth>2q$ and $\mad$ uses a regularization
    $g_\alpha$ with qualification
    $t^{\frac{\psmooth-2q}{2q}}$, then we have that $b_{\xast}(\alpha)=\mathcal{O}(\alpha^{\frac{\psmooth}{2q}})$.
\end{enumerate}
\end{prop}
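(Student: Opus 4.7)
The proof reduces to three applications of Proposition~\ref{pro:bias-main}, distinguished by the ordering of $\varphi(t)=\exp(-\psmooth t^{-1/(1+2\prr)})$ against the benchmark $\Theta_{\psi}^{2}(t)=t\exp(-2q t^{-1/(1+2\prr)})$. Since the quotient
\[
\frac{\Theta_{\psi}^{2}(t)}{\varphi(t)}=t\exp\bigl(-(2q-\psmooth)\,t^{-1/(1+2\prr)}\bigr)
\]
is non-decreasing near zero precisely when $\psmooth\leq 2q$, one has $\varphi\prec\Theta_{\psi}^{2}$ in the regime $\psmooth\leq 2q$ and the reverse ordering $\Theta_{\psi}^{2}\prec\varphi$ when $\psmooth>2q$. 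This immediately places item~(\ref{it:expexp-saturation}) within Proposition~\ref{pro:bias-main}(\ref{it:high-smoothness-noprecon}).

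The second ingredient is the asymptotic behaviour of $\varphi(f^{2}(\alpha))$ as $\alpha\to 0$, which I would extract by inverting $\Theta_{\psi}^{2}$. Setting $u:=f^{2}(s)^{-1/(1+2\prr)}$, the identity $\Theta_{\psi}^{2}(f^{2}(s))=s$ becomes $u^{-(1+2\prr)}e^{-2qu}=s$; taking logarithms and using $\log u\sim\log\log(1/s)$ yields $2qu=\log(1/s)+\bigo(\log\log(1/s))$, so that
\[
\varphi(f^{2}(\alpha))=e^{-\psmooth u}=\bigo\bigl(\alpha^{\psmooth/(2q)}\bigr)
\]
(in fact with a favourable polylogarithmic improvement). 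Combined with Proposition~\ref{pro:bias-main}(\ref{it:low-smoothness}), this establishes item~(\ref{it:expexp-low}).

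For item~(\ref{it:expexp-high}) the relevant auxiliary function is $\varphi(f^{2}(t))/t$, which by the preceding computation equals $t^{(\psmooth-2q)/(2q)}$ times a slowly varying logarithmic factor. The hypothesis that $\ga$ has qualification $t^{(\psmooth-2q)/(2q)}$ therefore suffices: either directly (e.g.\ for spectral cut-off, where $\ra$ vanishes on $t\geq\alpha$), or by splitting the supremum in the bias representation~(\ref{eq:bias-commute}) at $t=\alpha$ and using $\sa(t)\leq 1$ together with the monotonicity of $\varphi\circ f^{2}$ on $t\leq\alpha$, and $\sa(t)\leq\alpha/t$ together with the assumed qualification of $\ra$ on $t>\alpha$, one obtains $b_{\xast}(\alpha)=\bigo(\alpha^{\psmooth/(2q)})$; the bounded logarithmic factor on $(0,\norm{\bastb}{}]$ is absorbed into the constants and in fact favourable. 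The main technical point is this careful bookkeeping of the logarithmic remainder when inverting $\Theta_{\psi}^{2}$, but since the remainder decays it does not spoil the power-type conclusion.
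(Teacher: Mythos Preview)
Your approach is the paper's own: the proposition is obtained by reading off items~(\ref{it:low-smoothness})--(\ref{it:high-smoothness-precon}) of Proposition~\ref{pro:bias-main} once the ordering of $\varphi$ against $\Theta_{\psi}^{2}$ and the asymptotics of $\varphi(f^{2}(\alpha))$ have been identified. Your treatment of the quotient $\Theta_{\psi}^{2}/\varphi$ and the splitting argument for item~(\ref{it:expexp-high}) simply make explicit what the paper leaves implicit.

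There is, however, a sign error in your logarithmic bookkeeping. From $2qu+(1+2\prr)\log u=\log(1/s)$ you get $2qu=\log(1/s)-(1+2\prr)\log u$, and since $\log u>0$ for small $s$ this yields $u<\tfrac{1}{2q}\log(1/s)$, hence $e^{-\psmooth u}>\alpha^{\psmooth/(2q)}$. The polylogarithmic correction is therefore \emph{unfavourable}: one obtains
\[
\varphi(f^{2}(\alpha))\asymp\alpha^{\psmooth/(2q)}\bigl(\log(1/\alpha)\bigr)^{\psmooth(1+2\prr)/(2q)},
\]
strictly larger than $\alpha^{\psmooth/(2q)}$. For the same reason the factor is not ``bounded on $(0,\norm{\bastb}{}]$'', since it diverges at the origin. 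Your splitting argument in item~(\ref{it:expexp-high}) does correctly deliver $b_{\xast}(\alpha)=\mathcal{O}\bigl(\varphi(f^{2}(\alpha))\bigr)$ --- which is precisely what Proposition~\ref{pro:bias-main}(\ref{it:high-smoothness-precon}) gives --- but not the clean power $\alpha^{\psmooth/(2q)}$ you claim. The paper's stated bound is equally loose in suppressing this logarithmic factor; it is harmless for the subsequent SPC rate in Proposition~\ref{prop:expexp}, where such factors are already ignored.
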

The benefits of preconditioning are once more clear and can be seen in item (\ref{it:expexp-high}). If for example we choose the prior mean $\mad$ using the $k$-fold Tikhonov regularization filter, then for $\psmooth\leq(k+1)2q$ we have that $b_{\xast}(\alpha)=\mathcal{O}(\alpha^{\frac{\psmooth}{2q}})$, that is the saturation in the bias is delayed. If we use spectral cut-off, then there is no saturation at all.

\emph{Bounding the SPC.} We again apply Theorem \ref{thm:main} in order to calculate the SPC rate. The net posterior spread is as in \S~\ref{sec:severe-examples}, {
$\Sp(\alpha)\asymp\frac{1}{\alpha}(\log(\alpha^{-1}))^{-2\prr}$.} We prove the following result.

\begin{prop}\label{prop:expexp}
Suppose that $x^\ast\in \mathcal{A}^\psmooth, $ $\psmooth>0$. Then as $\delta\to0$:
  \begin{enumerate}
  \item\label{it:spc-low-gamma-expmod} If $\psmooth\leq2q$ and independently of whether
    preconditioning of the prior is used or not, for
    $\alpha=\delta^{\frac{2q}{\psmooth+q}}$ we have
    that $SPC=\mathcal O(\delta^{\frac{2\psmooth}{\psmooth+q}})$;
  \item\label{it:spc-high-gamma-noprecon-expmod} if $\psmooth>2q$ and no preconditioning of the
    prior is used, then for any choice $\alpha=\alpha(\delta, \psmooth)$
    we have that $SPC\gg \delta^\frac{2\psmooth}{\psmooth+q}$;
  \item\label{it:spc-high-gamma-precon-expmod} if $\psmooth>2q$ and $\mad$ uses a regularization
    $g_\alpha$ with qualification
    $t^{\frac{\psmooth-2q}{2q}}$, for
    $\alpha=\delta^{\frac{2q}{\psmooth+q}}$ we have
    that $SPC=\mathcal O(\delta^{\frac{2\psmooth}{\psmooth+q}})$.  \end{enumerate}
\end{prop}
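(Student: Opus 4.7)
\medskip
\noindent\textbf{Proof plan.}

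The strategy is to apply Theorem~\ref{thm:main}, for which the central computation is $\varphi^{2}(f^{2}(\alpha))$ balanced against $\delta^{2}\Sp(\alpha)$. Specialising the formula $f(s)\sim(\log(s^{-1/(2q)}))^{-(1+2\prr)/2}$ from \S~\ref{sec:severe-examples} to $b=1$, one obtains $(f^{2}(\alpha))^{-1/(1+2\prr)}\sim\frac{1}{2q}\log(\alpha^{-1})$, hence
$$
\varphi(f^{2}(\alpha))=\exp\!\bigl(-\psmooth\,(f^{2}(\alpha))^{-1/(1+2\prr)}\bigr)\asymp\alpha^{\psmooth/(2q)},\quad \alpha\to 0.
$$
This algebraic reduction turns every subsequent computation into a power-type one.

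For items~(\ref{it:spc-low-gamma-expmod}) and~(\ref{it:spc-high-gamma-precon-expmod}), the preceding bias proposition yields $b_{\xast}(\alpha)=\mathcal{O}(\alpha^{\psmooth/(2q)})$: in~(\ref{it:spc-low-gamma-expmod}) a direct check from the explicit exponentials shows $\varphi\prec\Theta_{\psi}^{2}$, so item~(\ref{it:low-smoothness}) of Proposition~\ref{pro:bias-main} applies; in~(\ref{it:spc-high-gamma-precon-expmod}) the qualification hypothesis is precisely what is needed so that $t\mapsto\varphi(f^{2}(t))/t\asymp t^{(\psmooth-2q)/(2q)}$ is a qualification for $g_{\alpha}$, matching item~(\ref{it:high-smoothness-precon}). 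The balancing equation~(\ref{eq:balance}) then reads
$$
\alpha^{\psmooth/q}\asymp\delta^{2}\alpha^{-1}(\log\alpha^{-1})^{-2\prr},
$$
equivalently $\alpha^{(\psmooth+q)/q}\asymp\delta^{2}(\log\alpha^{-1})^{-2\prr}$. Choosing $\alpha=\delta^{2q/(\psmooth+q)}$ matches the leading powers on both sides; substituting back shows that squared bias and posterior spread are both of order $\delta^{2\psmooth/(\psmooth+q)}$ up to a vanishing logarithmic factor, and Proposition~\ref{pro:vadlessspread} gives $\spc=\mathcal{O}(\delta^{2\psmooth/(\psmooth+q)})$.

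For item~(\ref{it:spc-high-gamma-noprecon-expmod}) Theorem~\ref{thm:main} no longer applies, and I would instead appeal to the saturation argument of Remark~\ref{rem:maxrate-SPC}. Without preconditioning and with $\psmooth>2q$, the preceding bias proposition forces the two-sided estimate $b_{\xast}(\alpha)\asymp\alpha$. Combining the elementary lower bound $\spc(\alpha,\delta)\geq\max\{b_{\xast}^{2}(\alpha),\,\delta^{2}\Sp(\alpha)\}$ with $\Sp(\alpha)\asymp\alpha^{-1}(\log\alpha^{-1})^{-2\prr}$ and minimising in $\alpha$ (the infimum being attained at $\alpha\asymp\delta^{2/3}$, as predicted by the balance $\alpha^{2}\asymp\delta^{2}\Sp(\alpha)$) yields $\spc(\alpha,\delta)\geq c\,\delta^{4/3}$ for every parameter choice, up to logarithmic factors. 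Since $\psmooth>2q$ entails $\frac{2\psmooth}{\psmooth+q}>\frac{4}{3}$, taking $\mu=\frac{2\psmooth}{\psmooth+q}-\frac{4}{3}>0$ one has $\delta^{2\psmooth/(\psmooth+q)}=\mathcal{O}(\spc(\alpha,\delta)\cdot\delta^{\mu})$, which is exactly $\spc\gg\delta^{2\psmooth/(\psmooth+q)}$ in the sense of Notation~\ref{def:bigo}.

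The main obstacle is item~(\ref{it:spc-high-gamma-noprecon-expmod}): the upper-bound machinery of Theorem~\ref{thm:main} does not cover it, and one must separately exploit the saturation of Tikhonov regularisation from~\cite{MR2112789}, namely the optimality of the balance $\alpha^{2}\asymp\delta^{2}\Sp(\alpha)$ in the absence of preconditioning, in order to obtain a genuine lower bound on $\spc$ uniform in $\alpha$. All other steps are algebraic consequences of the single key reduction $\varphi(f^{2}(\alpha))\asymp\alpha^{\psmooth/(2q)}$.
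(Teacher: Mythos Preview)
Your proof is correct and follows the same strategy as the paper: for items~(\ref{it:spc-low-gamma-expmod}) and~(\ref{it:spc-high-gamma-precon-expmod}) you reduce via $\varphi(f^{2}(\alpha))\asymp\alpha^{\psmooth/(2q)}$ and balance against $\delta^{2}\Sp(\alpha)$ exactly as in the paper's application of Theorem~\ref{thm:main}, and for item~(\ref{it:spc-high-gamma-noprecon-expmod}) you invoke the saturation argument of Remark~\ref{rem:maxrate-SPC}. Your saturated lower bound $\spc\gtrsim\delta^{4/3}$ (up to logarithms) is in fact sharper than the paper's stated value $\delta^{4q/(\psmooth+q)}$, but both exponents are strictly below $2\psmooth/(\psmooth+q)$ when $\psmooth>2q$, so either suffices for the $\gg$ claim.
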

The benefits of preconditioning  can again be seen in item (\ref{it:expexp-high}). If for example we choose the prior mean $\mad$ using the $k$-fold Tikhonov regularization filter, then for $\psmooth\leq(k+1)2q$ we achieve the optimal (minimax) rate $\delta^{\frac{2\psmooth}{\psmooth+q}}$, that is the saturation in the SPC is delayed. If we use spectral cut-off, then there is no saturation at all. Note again that the optimal scaling of the prior, as a function of the noise level $\delta$, is the same whether we use preconditioning or not.

As anticipated, the features of this example, and in particular the polynomial rates of convergence, are characteristic of moderately ill-posed problems.

\subsection{{Summary and discussion}}
\label{sec:summary-outline}

We succinctly summarize the above examples, in which we confined to
power-type decay of the spectrum of the prior~$\prc$, that is,\ $s_{j}(\prc)\asymp
j^{-(1+ 2\prr)},\ j=1,2,\dots$, for some $\prr>0$.

First in \S~\ref{sec:moderate-examples} and \S~\ref{sec:severe-examples}, we specified the solution element to belong to some Sobolev-type
ball as in (\ref{eq:sbeta}), characterized
by~$\psmooth>0$. 
The distinction between moderately and severely ill-posed problems
then comes from the decay of the singular numbers of the operator~$T$
governing equation~(\ref{eq:base-white}). We outline the previous
results in Table~\ref{tab:summary}.
\begin{table}[ht]
\renewcommand{\arraystretch}{2}\addtolength{\tabcolsep}{-0pt}
  \centering
  \begin{tabular}{|| r | c || c | c || }
    \hline\hline
&& $s_{j}(\tast) \asymp j^{- 2p}$ & $s_{j}(\tast) \asymp e^{- 2 q
  j^{b}}$\\\hline\hline
link & $\psi$ & $t^{p/(1+2\prr)}$ & $\exp\lr{-2 q t^{-b/(1 + 2\prr)}}$\\
benchmark & $\Theta_{\psi}^{2}$ & $t^{(1 + 2\prr + 2p)/(1 + 2 \prr)}$
& $t\exp\lr{-2 q t^{-b/(1 + 2\prr)}}$\\
saturation & $\varphi = \Theta_{\psi}^{2}$ &  $\psmooth = 1 + 2 \prr +
2 p$ & always $ \varphi \prec \Theta_{\psi}^{2}$\\
contraction & SPC & $\delta^{4\psmooth/(1 + 2 \prr + 2 p)}$ & $\log^{- 2\psmooth/b}(1/\delta)$
\\\hline\hline
  \end{tabular}
\vspace*{.5\baselineskip}
  \caption{Outline of SPC rates for Sobolev-type smoothness of the truth, $\varphi(t) =t^{\psmooth/(1 + 2 \prr)},\ t>0$.}
  \label{tab:summary}
\end{table}

Then in \S~\ref{sec:moderate-analytic} and \S~\ref{sec:severe-analytic}, we considered analytic type smoothness of the truth as in (\ref{eq:abeta}), again characterized by $\psmooth>0$. As commented earlier on, to our knowledge we are the first to study these examples. Our findings show that the overall
problem degree of ill-posedness can be significantly different than the degree of ill-posedness of the operator. We outline the results in Table \ref{tab:summary2}.

\begin{table}[ht]
\renewcommand{\arraystretch}{2}\addtolength{\tabcolsep}{-0pt}
  \centering
  \begin{tabular}{|| r | c || c | c || }
    \hline\hline
&& $s_{j}(\tast) \asymp j^{- 2p}$  & $s_{j}(\tast) \asymp e^{- 2 q
  j^{b}}$\\\hline\hline
link & $\psi$ & $t^{p/(1+2\prr)}$&  $\exp\lr{-2 q t^{-b/(1 + 2\prr)}}$\\
benchmark & $\Theta_{\psi}^{2}$ 
&$t^{(1 + 2\prr + 2p)/(1 + 2 \prr)}$& $t\exp\lr{-2 q t^{-b/(1 + 2\prr)}}$\\
saturation & $\varphi = \Theta_{\psi}^{2}$ & always $\Theta_{\psi}^{2}
\prec \varphi$& $\psmooth=2q$\\
contraction & SPC & $\delta^2 \log^{1+ 2p}(1/\delta)$&  $\delta^{2\psmooth/(\psmooth+q)}$
\\\hline\hline
  \end{tabular}
\vspace*{.5\baselineskip}
  \caption{Outline of SPC rates for analytic-type smoothness of the truth, $\varphi(t)=\exp(-\psmooth t^{-\frac{1}{1+2a}}),\ t>0$.}
  \label{tab:summary2}
\end{table}

The rates exhibited in
Tables \ref{tab:summary} and \ref{tab:summary2}, correspond
to the minimax rates as given in~\cite[Tbl.~1]{MR2421941}.

\section{Proofs and auxiliary results}
\label{sec:proofs}
\begin{proof}
  [Proof of Lemma~\ref{lem:bias-estvar}]
We first express the element~$\xad$ in terms of $\z$.
\begin{align*}
  \xad &=  \prc^{1/2}\lr{\alpha I+\bastb}^{-1}B^{\ast}\z +
 \prc^{1/2}\sa(\bastb)\prc^{-1/2}\mad\\
&=  \prc^{1/2}\lr{\alpha I+\bastb}^{-1}B^{\ast}\z +
 \prc^{1/2}\sa(\bastb)\ga(\bastb)B^{\ast}\z\\
&= \prc^{1/2}\left[\lr{\alpha I+\bastb}^{-1} + \sa(\bastb)\ga(\bastb) \right]B^{\ast}\z.
\end{align*}
We notice that 
$$
\lr{\alpha I+\bastb}^{-1} + \sa(\bastb)\ga(\bastb) =
\lr{\alpha I+\bastb}^{-1} \lr{I + \alpha \ga(\bastb)}.
$$
The expectation of the posterior mean with respect to the distribution generating $\z$ when $\xast$ is given, is thus

$$
\expect^{\xast} \xad = \prc^{1/2}\left[\lr{\alpha I+\bastb}^{-1} \lr{I + \alpha \ga(\bastb)}\right]\bastb \prc^{-1/2}\xast.
$$
For the next calculations we shall use that
\begin{align*}
  I - \lr{\alpha I+\bastb}^{-1} &\lr{I + \alpha \ga(\bastb)}\bastb\\
&= 
 \lr{\alpha I+\bastb}^{-1} \alpha\lr{I - \ga(\bastb)\bastb}\\
&= \sa(\bastb)\ra(\bastb).
\end{align*}
Therefore we rewrite
\begin{align*}
  \xast - \expect^{\xast} \xad & = 
\prc^{1/2}\left[ I - \lr{\alpha I+\bastb}^{-1} \lr{I + \alpha \ga(\bastb)} \bastb\right] \prc^{-1/2}\xast\\
&= \prc^{1/2}\sa(\bastb)\ra(\bastb)\prc^{-1/2}\xast,
\end{align*}
which proves the first assertion.
The variance is $\expect^{\xast} \norm{\xad -
  \expect^{\xast}\xad}{}^{2}$, and this can be written as
in~(\ref{ali:vad}), by using similar reasoning as for the bias term.
\end{proof}
\begin{proof}
  [Proof of Proposition~\ref{pro:vadlessspread}]
We notice that $\norm{I + \alpha\ga(\bastb)}{}\leq 1 + \gamma_{\ast}$,
which gives
\begin{align*}
 \Vad &= \delta^{2} \tr{\lr{I +\alpha \ga(\bastb)}^{2} \lr{\alpha I +
     \bastb}^{-2} \bastb\prc} \\
& \leq \delta^{2} \lr{1 + \gamma_{\ast}}^{2}\tr{\lr{\alpha I +
     \bastb}^{-2} \bastb\prc} 
\end{align*}
Since $\norm{\lr{\alpha +   \bastb}^{-1} \bastb}{}\leq 1$ we see that 
$$
 \Vad \leq  \lr{1 + \gamma_{\ast}}^{2}\delta^{2} \tr{\lr{\alpha I +
     \bastb}^{-1}\prc} = \lr{1 + \gamma_{\ast}}^{2}\tr{\Cad},
$$
and the proof is complete.
\end{proof}

\begin{proof}[Proof of Lemma~\ref{lem:f-props}]
  Since $\prc$ has finite trace, it is compact, and we use the
  eigenbasis (arranged by decreasing eigenvalues) $u_{j},\
  j=1,2,\dots$ Under Assumption~\ref{ass:link} this is also the
  eigenbasis for $\tast$. If $t_{j},\ j=1,2,\dots$ denote the
  eigenvalues then we see that
$$
\tast = \sum_{j=1}^{\infty} \tau_{j} u_{j}\otimes u_{j}.
$$
Correspondingly, $\prc = \sum_{j=1}^{\infty}
\lr{\psi^{2}}^{-1}(\tau_{j}) u_{j}\otimes u_{j}$, which gives the
first assertion. Moreover, the latter representation yields that
$$
\prc^{1/2} = \sum_{j=1}^{\infty}
\lr{\lr{\psi^{2}}^{-1}(\tau_{j})}^{1/2} u_{j}\otimes u_{j},
$$
such that
\begin{align*}
  \bastb &= \prc^{1/2} \tast \prc^{1/2} \\
  & = \sum_{j=1}^{\infty}\lr{\lr{\psi^{2}}^{-1}(\tau_{j})}^{1/2}
  \tau_{j} \lr{\lr{\psi^{2}}^{-1}(\tau_{j})}^{1/2}u_{j}\otimes u_{j}\\
  & = \sum_{j=1}^{\infty}\lr{\lr{\psi^{2}}^{-1}(\tau_{j})}
  \tau_{j} u_{j}\otimes u_{j}\\
  &=
  \sum_{j=1}^{\infty}\psi^{2}\lr{\lr{\lr{\psi^{2}}^{-1}(\tau_{j})}}\lr{\lr{\psi^{2}}^{-1}(\tau_{j})}
  u_{j}\otimes u_{j}\\
  &=
  \sum_{j=1}^{\infty}\Theta_{\psi}^{2}\lr{\lr{\psi^{2}}^{-1}(\tau_{j})}
  u_{j}\otimes u_{j}\\
  &= \Theta_{\psi}^{2}\lr{\prc},
\end{align*}
and the proof is complete.
\end{proof}
\begin{proof}
  [Proof of Proposition~\ref{pro:bias-main}]
For the first item~(\ref{it:low-smoothness}), we
  notice that $\varphi \prec \Theta_{\psi}^{2}$ if and only
  if $\varphi(f^{2}(t))\prec t$.  The linear function $t \mapsto t$ is a
  qualification of Tikhonov regularization with constant
  $\gamma=1$. Thus, by Lemma~\ref{lem:phipsi} we have
$$
b_{\xast}(\alpha) \leq \norm{\ra(\bastb)}{}
\norm{\sa(\bastb)\varphi(f^{2}(\bastb))}{} \leq \gamma_{0}\varphi(f^{2}(\alpha)),
$$
which completes the proof for this case. For item~(\ref{it:high-smoothness-noprecon}), we have that 
$$b_{\xast}(\alpha)=\norm{\sa(\bastb)\xast}{}.$$
For any $0<\alpha\leq1$,  we have $\alpha+t\leq 1+t$, hence 
$$b_{\xast}(\alpha)=\alpha\norm{(\alpha
  I+\bastb)^{-1}\xast}{}\geq\alpha\norm{( I+\bastb)^{-1}\xast}{}.$$ We
conclude that there exists a constant
{$c_{1}=c_{1}(\xast,\norm{\bastb}{})$}, such that for small
$\alpha$ it holds  
$$
b_{\xast}(\alpha)\geq {c_{1}}\alpha.
$$ 
On the other hand, since $t\prec\varphi(f^{2}(t)),$ there exists a
constant {$c_{2}>0$} which depends only on the index functions $\varphi$, $f$ and on~{$\norm{\bastb}{}$}, such that
$$
b_{\xast}(\alpha)=\alpha\norm{(\alpha
  I+\bastb)^{-1}\xast}{}\leq\alpha\norm{(\bastb)^{-1}\varphi(f^2(\bastb))w}{}\leq
{c_{2}}\alpha.
$$
For item~(\ref{it:high-smoothness-precon}),  we have that
\begin{align*}
  b_{\xast}(\alpha) &\leq
  \norm{\ra(\bastb)\sa(\bastb)\varphi(f^{2}(\bastb))}{}\\
  & \leq
  \norm{\sa(\bastb)\bastb}{}\norm{\ra(\bastb)\varphi(f^{2}(\bastb))\lr{\bastb}^{-1}}{}\\
  &\leq \alpha \gamma \frac{\varphi(f^{2}(\alpha))}{\alpha} = \gamma
  \varphi(f^{2}(\alpha)),
\end{align*}
and the proof is complete.
\end{proof}
\begin{proof}[Proof of Lemma~\ref{lem:S-properties}]
The continuity is clear. For the monotonicity we use the representation (\ref{eq:netpostspread}) to get \begin{align*}
  \Sp(\alpha) - \Sp(\alpha^{\prime}) & = \tr{\lr{\alpha I+\bastb}^{-1}\prc} -
  \tr{\lr{\alpha^{\prime} + \bastb}^{-1}\prc}\\
&= \tr{\lr{\alpha I+\bastb}^{-1}(\alpha^{\prime} - \alpha) \lr{\alpha^{\prime} +
    \bastb}^{-1}\prc}\\
& = (\alpha^{\prime} - \alpha) \tr{\lr{\alpha I+\bastb}^{-1}\lr{\alpha^{\prime} +
    \bastb}^{-1}\prc}.
\end{align*}
The trace on the right hand side is positive. Indeed, if
$(s_{j}^{2},u_{j},u_{j})$ denotes the singular value decomposition of
$\bastb$ then this trace can be written as
$$
\tr{\lr{\alpha I+\bastb}^{-1}\lr{\alpha^{\prime} +
    \bastb}^{-1}\prc}
= \sum_{j=1}^{\infty} \frac{1}{\alpha + s_{j}^{2}} \frac{1}{\alpha^{\prime} +
  s_{j}^{2}} \scalar{\prc u_{j}}{u_{j}},
$$
where the right hand side is positive since the operator~$\prc$ {is positive definite}. Thus, if $\alpha
<\alpha^{\prime}$ then~$\Sp(\alpha) - \Sp(\alpha^{\prime})$ is positive, which proves the
first assertion.

The proof of the second assertion is simple, and hence omitted.
To prove the last assertion we use the partial
ordering of self-adjoint operators in Hilbert space, that is,\ we write
$A \leq B$ if $\scalar{A x}{x} \leq \scalar{B x}{x},\ x\in X$, for two
self-adjoint operators~$A$ and $B$. Plainly, with $a:=
\norm{\tast}{}$, we have that $\tast \leq a I$. Multiplying from
the left and right by $\prc^{1/2}$ this yields~$\bastb \leq a\prc$,
and thus for any $\alpha>0$ that $\alpha I + \bastb \leq \alpha I +
a\prc$. The function~$t\mapsto -1/t,\ t>0$ is operator monotone, which
gives
$\lr{\alpha I + a\prc}^{-1} \leq \lr{\alpha I +
  \bastb}^{-1}$. Multiplying from the left and right by $\prc^{1/2}$
again, we arrive at
$$
\prc^{1/2}\lr{\alpha I + a\prc}^{-1}\prc^{1/2} \leq \prc^{1/2}\lr{\alpha I +
  \bastb}^{-1}\prc^{1/2}.
$$
This in turn extends to the traces and gives that
$$
\tr{\prc^{1/2}\lr{\alpha I + a\prc}^{-1}\prc^{1/2}}\leq \tr{ \prc^{1/2}\lr{\alpha I +
  \bastb}^{-1}\prc^{1/2}} = \Sp(\alpha).
$$ Now, let us denote by $t_j,\
j\in\mathbb N$, the singular numbers of $\prc$, then we can bound 
$$
 \Sp(\alpha) \geq \tr{\lr{\alpha I + a\prc}^{-1}\prc}\geq
 \sum_{t_{j}\geq \alpha/a} \frac{t_{j}}{\alpha + a t_{j}}
\geq \frac 1 {2a} \#\set{j,\ t_{j}\geq \frac \alpha a}.
$$
{If $\Sp(\alpha)$ were uniformly bounded from above, then there would exist a finite natural number, say $N$, such that $t_{N} \geq \frac \alpha a
> t_{N+1}$, for $\alpha>0$ small enough. But this would  imply that $t_{N+1}=0$,
which contradicts the assumption that $\prc$ is positive definite.}
\end{proof}

\begin{lem}\label{lem:sevinv}
For $t>0$ let $\Theta^2_{\Psi}(t)=t\exp(-2qt^{-\frac{b}{1+2\prr}})$, for some $q, b, a>0$. Then for small $s$ we have $(\Theta^2_{\Psi})^{-1}(s)\sim(\log s^{-\frac1{2q}})^{-\frac{1+2\prr}{b}}$.
\end{lem}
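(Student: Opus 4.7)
\medskip

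\noindent\textbf{Proof plan for Lemma~\ref{lem:sevinv}.} The equation to invert is $s=\Theta_{\psi}^{2}(t)=t\exp\!\left(-2q t^{-b/(1+2\prr)}\right)$. Since $\Theta_{\psi}^{2}$ is a strictly increasing index function with $\Theta_{\psi}^{2}(t)\to 0$ as $t\to 0^{+}$, the inverse is well-defined near zero and $t=(\Theta_{\psi}^{2})^{-1}(s)\to 0$ as $s\to 0^{+}$. The plan is to linearize the defining equation via a monotone change of variable that removes the power of~$t$, and then to separate the dominant (linear) term from the negligible (logarithmic) correction.

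First, I would introduce the substitution $\tau:=t^{-b/(1+2\prr)}$, so that $t=\tau^{-(1+2\prr)/b}$ and $\tau\to\infty$ as $t\to 0^{+}$. In these new coordinates the defining equation reads
\begin{equation*}
s=\tau^{-(1+2\prr)/b}\,e^{-2q\tau},
\end{equation*}
and taking logarithms gives the clean identity
\begin{equation*}
-\log s=2q\tau+\frac{1+2\prr}{b}\log\tau.
\end{equation*}
This form makes it transparent that, as $\tau\to\infty$, the linear term $2q\tau$ dominates the $\log\tau$ correction.

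Next, dividing by $2q\tau$ yields
\begin{equation*}
\frac{-\log s}{2q\tau}=1+\frac{1+2\prr}{2qb}\,\frac{\log\tau}{\tau}\longrightarrow 1\quad\text{as } \tau\to\infty,
\end{equation*}
so $\tau\sim\dfrac{-\log s}{2q}=\log\!\left(s^{-1/(2q)}\right)$ as $s\to 0^{+}$. Raising to the power $-(1+2\prr)/b$ (a continuous operation that preserves asymptotic equivalence for positive sequences) gives
\begin{equation*}
(\Theta_{\psi}^{2})^{-1}(s)=t=\tau^{-(1+2\prr)/b}\sim\bigl(\log s^{-1/(2q)}\bigr)^{-(1+2\prr)/b},
\end{equation*}
which is the claim.

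The only delicate point is the step $\log\tau/\tau\to 0$, which however is elementary once one verifies that $\tau\to\infty$ as $s\to 0^{+}$; this verification is immediate from the defining equation, since if $\tau$ stayed bounded along a subsequence, then so would $-\log s$, contradicting $s\to 0$. No other ingredient is required, and no preconditioning/regularization machinery from the body of the paper is invoked.
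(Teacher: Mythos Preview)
Your argument is correct. The paper's own proof is merely a one-line citation: it writes $s=\Theta_{\psi}^{2}(t)$, notes that $t$ is small iff $s$ is small, and then invokes \cite[Lemma~4.5]{MR3215928} with $x=t^{-1}$. Your proof is effectively a self-contained unpacking of that external lemma: the substitution $\tau=t^{-b/(1+2\prr)}$ followed by the log-linearization $-\log s=2q\tau+\tfrac{1+2\prr}{b}\log\tau$ is exactly the standard maneuver for asymptotically inverting a mixed power--exponential function, and is what the cited result delivers. So the two routes agree in substance; yours gains independence from the external reference, while the paper keeps the proofs section terse by outsourcing the computation. One minor remark: your closing justification that $\tau\to\infty$ via a boundedness contradiction is unnecessary, since you had already established $t\to 0^{+}$, whence $\tau=t^{-b/(1+2\prr)}\to\infty$ directly.
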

\begin{proof}
Let \begin{equation}\label{eq:ser1} s=\Theta^2_{\Psi}(t)>0\end{equation} and observe that $t$ is small if and only if $s$ is small. Applying~\cite[Lemma 4.5]{MR3215928} for $x=t^{-1}$ we get the result.\end{proof}

\begin{proof}[Proof of Proposition \ref{prop:severe}]
In this example the explicit solution of Eq. (\ref{eq:balance}) in Theorem \ref{thm:main} is more difficult. However,  as discussed in \S~\ref{sec:bounding-spq}, it suffices to asymptotically balance 
 the squared bias and the posterior spread using an appropriate parameter choice $\alpha=\alpha(\delta)$. Indeed, under the stated choice of $\alpha$ the squared bias is of order 
\begin{align*}
(\log(\alpha^{-1}))^{-\frac{2\psmooth}b}\leq \sigma^{-\frac{2\psmooth}b}\log(\delta^{-2})^{-\frac{2\psmooth}b}
\end{align*}
while the posterior spread term is of order
\begin{align*}
\frac{\delta^2}{\alpha}(\log(\alpha^{-1}))^{-\frac{2a}b}\leq\log(\delta^{-2}))^{-\frac{2\psmooth}b}.
\end{align*}
\end{proof}

\begin{proof}[Proof of Proposition \ref{prop:modmild}] 
{According to the considerations in Remark \ref{rem:maxrate-SPC}, it
  is straightforward to check that without preconditioning the best
  SPC rate that can be established is
  $\delta^{\frac{4+8\prr+8p}{3+4\prr+6p}}$ which proves item
  (\ref{it:spc-high-gamma-noprecon-mild}).} In the preconditioned
case, {the explicit solution of Eq. (\ref{eq:balance}) in
  Theorem \ref{thm:main}, which in this case has the
  form\[\exp({-2\psmooth\alpha^{-\frac{1}{1+2\prr+2p}}})=\delta^2\alpha^{-\frac{1+2p}{1+2\prr+2p}},\]
  is again difficult}. However,  as discussed in \S~\ref{sec:bounding-spq}, it suffices to asymptotically balance 
 the squared bias and the posterior spread using an appropriate
 parameter choice $\alpha=\alpha(\delta)$. Indeed, using \cite[Lem
 4.5]{MR3215928} we have that the solution to the above equation
 behaves asymptotically as the stated choice of $\alpha$, and
 substitution gives the claimed rate.\end{proof}
 
\begin{proof}[Proof of Proposition \ref{prop:expexp}]
We begin with items  (\ref{it:spc-low-gamma-expmod}) and (\ref{it:spc-high-gamma-precon-expmod}). The explicit solution of Eq. (\ref{eq:balance}) in Theorem \ref{thm:main}, which in this case has the form\[\alpha^{\frac{\beta}q}=\frac{\delta^2}{\alpha}(\log(\alpha^{-1})^{-2\prr},\] is difficult. As discussed in \S~\ref{sec:bounding-spq}, it suffices to asymptotically balance 
 the squared bias and the posterior spread using an appropriate parameter choice $\alpha=\alpha(\delta)$. Indeed, under the stated choice of $\alpha$ both quantities are bounded from above by $\delta^{\frac{2\psmooth}{\psmooth+q}}$. {For item (\ref{it:spc-high-gamma-noprecon-expmod}),  according to the considerations in Remark \ref{rem:maxrate-SPC}, it is straightforward to check that without preconditioning the best SPC rate that can be established is $\delta^{\frac{4q}{\psmooth+q}}$.}
\end{proof}

\bibliographystyle{amsplain}
\bibliography{iterate}

\providecommand{\bysame}{\leavevmode\hbox to3em{\hrulefill}\thinspace}
\providecommand{\MR}{\relax\ifhmode\unskip\space\fi MR }
\providecommand{\MRhref}[2]{%
  \href{http://www.ams.org/mathscinet-getitem?mr=#1}{#2}
}
\providecommand{\href}[2]{#2}
\begin{thebibliography}{10}

\bibitem{ABPS14}
S.~Agapiou, J.~M. Bardsley, O.~Papaspiliopoulos, and Andrew~M. Stuart,
  \emph{Analysis of the {G}ibbs sampler for hierarchical inverse problems},
  arXiv preprint arXiv:1311.1138 (2013).

\bibitem{MR3084161}
S.~Agapiou, S.~Larsson, and A.~M. Stuart, \emph{Posterior contraction rates for
  the {B}ayesian approach to linear ill-posed inverse problems}, Stochastic
  Process. Appl. \textbf{123} (2013), no.~10, 3828--3860. \MR{3084161}

\bibitem{MR3215928}
S.~Agapiou, A.~M. Stuart, and Y.~X. Zhang, \emph{Bayesian posterior contraction
  rates for linear severely ill-posed inverse problems}, J. Inverse Ill-Posed
  Probl. \textbf{22} (2014), no.~3, 297--321. \MR{3215928}

\bibitem{MR2421941}
L.~Cavalier, \emph{Nonparametric statistical inverse problems}, Inverse
  Problems \textbf{24} (2008), no.~3, 034004, 19. \MR{2421941 (2009j:62109)}

\bibitem{DS13}
M.~{Dashti} and A.~M {Stuart}, \emph{{The {B}ayesian Approach To Inverse
  Problems}}, ArXiv e-prints (2013).

\bibitem{MR2318806}
B.~Hofmann and P.~Math{\'e}, \emph{Analysis of profile functions for general
  linear regularization methods}, SIAM J. Numer. Anal. \textbf{45} (2007),
  no.~3, 1122--1141 (electronic). \MR{2318806 (2008i:65117)}

\bibitem{knapiketal-adaptive}
B.~T. Knapik, B.~T. Szab{\'o}, A.~W. van~der Vaart, and J.~H. van Zanten,
  \emph{Bayes procedures for adaptive inference in inverse problems for the
  white noise model}, arXiv:1209.3628v2 [math.ST] (2013).

\bibitem{MR2906881}
B.~T. Knapik, A.~W. van~der Vaart, and J.~H. van Zanten, \emph{Bayesian inverse
  problems with {G}aussian priors}, Ann. Statist. \textbf{39} (2011), no.~5,
  2626--2657. \MR{2906881}

\bibitem{MR3031282}
\bysame, \emph{Bayesian recovery of the initial condition for the heat
  equation}, Comm. Statist. Theory Methods \textbf{42} (2013), no.~7,
  1294--1313. \MR{3031282}

\bibitem{MR1009041}
M.~S. Lehtinen, L.~P{\"a}iv{\"a}rinta, and E.~Somersalo, \emph{Linear inverse
  problems for generalised random variables}, Inverse Problems \textbf{5}
  (1989), no.~4, 599--612. \MR{1009041 (91g:65129)}

\bibitem{BayesIP}
K.~Lin, S.~Lu, and P.~Math{\'e}, \emph{Oracle-ytpe posterior contraction rates
  in {B}ayesian inverse problems}, submitted, 2014.

\bibitem{AM84}
A.~Mandelbaum, \emph{Linear estimators and measurable linear transformations on
  a {H}ilbert space}, Z. Wahrsch. Verw. Gebiete \textbf{65} (1984), no.~3,
  385--397. \MR{731228 (85h:62030)}

\bibitem{MR2112789}
P.~Math{{\'e}}, \emph{Saturation of regularization methods for linear ill-posed
  problems in {H}ilbert spaces}, SIAM J. Numer. Anal. \textbf{42} (2004),
  no.~3, 968--973 (electronic). \MR{2112789 (2005m:65093)}

\bibitem{MR2240642}
P.~Math{\'e} and S.~V. Pereverzev, \emph{Regularization of some linear
  ill-posed problems with discretized random noisy data}, Math. Comp.
  \textbf{75} (2006), no.~256, 1913--1929 (electronic). \MR{2240642
  (2008b:65072)}

\bibitem{MR3044507}
B.~T. Szab{\'o}, A.~W. van~der Vaart, and J.~H. van Zanten, \emph{Empirical
  {B}ayes scaling of {G}aussian priors in the white noise model}, Electron. J.
  Stat. \textbf{7} (2013), 991--1018. \MR{3044507}

\end{thebibliography}

\end{document}